\newcommand{\Frac}[2]{\displaystyle\frac{#1}{#2}}
\newcommand{\vect}[1]{\bm{#1}}
\newcommand{\Int}[2]{\displaystyle\int_{#1}^{#2}}
\newcommand{\unit}[1]{\mathrm{#1}}
\newtheorem{remark}{Remark}[section]
\newtheorem{proposition}{Proposition}[section]
\newtheorem{theorem}{Theorem}[section]
\begin{document}

\title[3D Ion Channels]
{Three-Dimensional Simulation of Biological Ion 
Channels Under Mechanical, Thermal and Fluid Forces}

\author{Riccardo Sacco$^{1}$ \and Paolo Airoldi$^{1}$ 
\and Aurelio G. Mauri$^{1}$
\and Joseph W. Jerome$^{2}$}

\address{$^{1}$ Dipartimento di Matematica,
               Politecnico di Milano \\
	       Piazza L. da Vinci 32, 20133 Milano, Italy \\
$^{2}$ Northwestern University, Mathematics Department\\
	2033 Sheridan Road Evanston, IL 60208-2730, USA}

\date{\today}

\begin{abstract}
In this article we address the three-dimensional modeling and 
simulation
of biological ion channels using a continuum-based approach. 
Our multi-physics formulation self-consistently
combines, to the best of our knowledge for the first time, 
ion electrodiffusion, channel fluid motion, thermal self-heating
and mechanical deformation. The resulting system of nonlinearly
coupled partial differential equations in conservation form is
discretized using the Galerkin Finite Element Method.
The validation of the proposed computational model is carried out 
with the simulation of a cylindrical voltage operated ion nanochannel 
with K$^+$ and Na$^+$ ions. We first investigate the coupling between 
electrochemical and fluid-dynamical effects. Then, we 
enrich the modeling picture by investigating the influence of a thermal gradient. Finally, we add a mechanical stress 
responsible for channel deformation and investigate its effect 
on the functional response of the channel. Results show
that fluid and thermal fields have no influence 
in absence of mechanical deformation whereas ion distributions and 
channel functional response are significantly modified 
if mechanical stress is included in the model. These predictions agree with 
biophysical conjectures on the importance of protein conformation 
in the modulation of channel electrochemical properties.
\end{abstract}

\maketitle

\section{Introduction and Motivation}\label{sec:intro}
Ion channels connect the intracellular environment
and the surrounding biological medium, allowing
the dynamical exchange of the chemical species that supervise the functions 
of every cell in the human body. Ion channels control 
electrical signal transmission among excitable cells, 
notably, muscle cells and/or neurons, and because of their fundamental 
role they have been
subject to biophysical and mathematical investigation for a long time.

The most commonly used approach to ion channel modeling is based on the representation of the cell membrane lipid bilayer as an equivalent 
electrical circuit including capacitive and conductive elements, 
these latter being in general described by a nonlinear current-voltage 
characteristic. The resulting formulation is constituted by 
a nonlinear system of ordinary differential equations (ODEs) 
whose best known example is the Hodgkin-Huxley (HH)
model~\cite{HH1952a,HH1952b}. We refer 
to~\cite{KeenerSneyd1998,ErmentroutBook} for a detailed treatment
of ODE-based differential models and their application 
to relevant problems in Computational Biology.

ODE differential models are widely used because of their
limited computational cost and ease of implementation. 
However, their biophysical accuracy is limited and often allows to 
characterize only basic properties of the cellular system under 
investigation, such as the homeostatic Nernst potential~\cite{Hille2001}, 
and/or to reproduce simple experiments in Electrophysiology, such as the 
transmembrane current in voltage clamp conditions
~\cite{HodgkinHuxleyKatz1952c,HodgkinHuxley1952d}.

To analyze the fundamental mechanisms that govern ion transport 
across a membrane channel, a higher model complexity is required.
With this aim, a system of partial differential equations (PDEs) 
expressing the balance of mass and of linear momentum 
for each ion species can be adopted to describe the dynamical balance 
of electro-diffusion forces acting on chemicals flowing across the
cell membrane bilayer. The resulting formulation is represented 
by the Poisson-Nernst-Planck (PNP) model for ion electro-diffusion.
We refer to~\cite{BarcilonPartI1992,BarcilonPartII1992,park,Barcilon},
for a detailed illustration of the PNP differential system and 
a discussion of its mathematical and numerical properties.
\begin{figure}[h!]
\centering
\includegraphics[width=0.6\columnwidth]{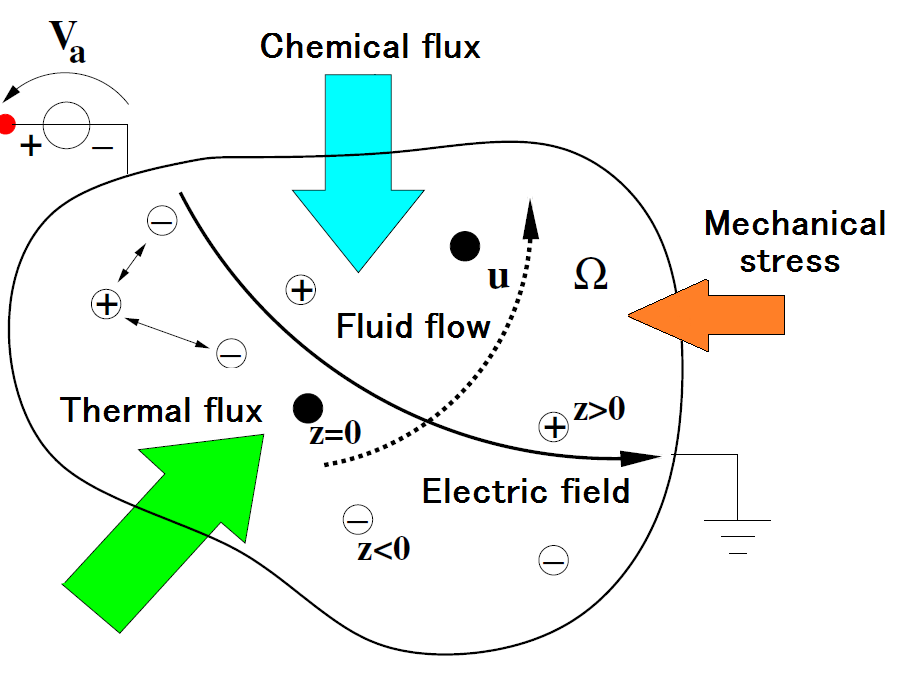}
\caption{Scheme of the general physical problem.}\label{fig:scheme_problem}
\end{figure} 

In the present work we address the three-dimensional (3D) 
study of a generalized version of the PNP model,  
modified by including the effect of electrolyte fluid convective 
transport, a thermal gradient and mechanical forces. 
The extension of the PNP system accounting for fluid velocity
is well-known as the {\it velocity-extended} PNP model and has been
the object of extensive mathematical and numerical investigation 
in~\cite{Rubinstein1990,Jerome2002,Sacco2009,Schmuck,Mauri2014}.
In these pages we adopt a wider vision of ion transport as 
schematically depicted in Fig.~\ref{fig:scheme_problem} 
where $\Omega$ is an open bounded domain representing 
the bio-physical system that includes the ions in motion 
through a physiological fluid
under the action of several externally applied forces: electrical, thermal, chemical and mechanical. Each of these external forces 
contributes to determining ion flow in the medium according to the effect of
the following vector fields:
\begin{enumerate}
\item electric field;
\item concentration gradient;
\item thermal gradient;
\item electrolyte fluid.
\end{enumerate}
In typical applications of Electrophysiology and Cellular Biology
the electric field may be the result of the application to the cell of an external voltage drop such as in a patch clamp 
experiment~\cite{neher1992patch,Levis_Rae,huxley2002overshoot},
and/or the presence of fixed charged ions embedded in the portion of 
lipid bilayer surrounding a transmembrane channel~\cite{Hille2001,Siwy14}.
The ion concentration gradient may be the result of the administration of a substance such as a drug or a toxin~\cite{Friel1992,Straub2001,hutzler2006high,zeck2001noninvasive}, 
whereas the thermal gradient may be the result of the activation 
of heat-sensitive thermo-transient receptor potential (thermo-TRP) 
cation channels exposed to capsaicin, a natural ingredient of spicy foods such as red hot chilli peppers and of cold-sensitive TRP channels exposed to 
menthol~\cite{caterina1997,voets2004}. 
Ion motion can also be activated by a physiological 
fluid shear stress acting on a mechanosensitive 
ion channel~\cite{Sachs2010}. Conversely, ion motion can
determine a modification of the physical properties of the surrounding 
environmnent through the effect of electro-osmotic 
pressure~\cite{Kedem1958,ChangYaoBook}, whereas the mechanical 
action can be determined by the chemicals signal acting to change 
the size of the channel or of the vessel in which the fluid flows
as in the mechanism of neurovascular 
coupling~\cite{attwellglial2010,Newman2013}.

The 3D simulation of the Generalized PNP equations 
(for short, GenPNP) including 
electrolyte fluid convective transport, thermal gradient 
and mechanical forces is a formidable
task. The presently available computer resources and 
numerical discretization schemes, the Galerkin Finite Element Method
(GFEM) in our case, open the possibility 
to explore at the cellular scale, 
to the best of our knowledge for the first time, 
the multi-physics complexity of the phenomena 
illustrated above. The computational structure devised 
to numerically solve in a fully self-consistent manner the GenPNP system 
is part of the software MP-FEMOS 
(Multi-Physics Finite Element Modeling Oriented Simulator) 
that has been developed by some of the authors~\cite{Mauri2014,Mauri2014currents,Airoldi2015}. 
MP-FEMOS is a general-purpose modular 
numerical code based on the use of the 
GFEM implemented in a fully 3D framework through 
shared libraries using an objected-oriented programming language (C++).  
Several kinds of finite elements on simplexes are included for 
appropriate discretization of the various PDEs constituting the 
problem at hand. In particular, the electro-chemical and thermal 
blocks are solved using the exponentially fitted edge averaged
(EAFE) piecewise linear finite element scheme studied in~\cite{gatti1998new} 
and~\cite{Zikatanov:EAFE}; the flow of the incompressible electrolyte fluid
is solved using the Taylor-Hood finite element pair 
(see~\cite{QV}, Chapter 9), whereas the mechanical deformation of the 
channel is numerically computed using standard 
piecewise linear finite elements.

An outline of the article is as follows. 
In sec.~\ref{sec:geometry_fem} we introduce the basic notation of 
the problem at hand, its geometry and partition into simplices.
In sec.~\ref{sec:model} the mathematical model of the GenPNP system
is discussed in detail. In sec.~\ref{sec:algorithms} 
the solution algorithm employed to decouple the various blocks of 
the full system is described. Sec.~\ref{sec:weak} is 
devoted to illustrate the weak formulation for each differential subproblem whereas sec.~\ref{sec:feapproximation} treats the adopted 
finite element discretization schemes. 
Sec.~\ref{sec:simulations} contains an extensive bouquet of 
simulations aimed at validating the biophysical accuracy of 
model and computational schemes. Finally, 
sec.~\ref{sec:conclusions} collects concluding remarks and future
research objectives.

\section{Basic Notation, Geometry and Geometrical Discretization}\label{sec:geometry_fem}

Referring to Fig.~\ref{fig:scheme_problem}, 
in this section we introduce the basic nomenclature associated with the
multi-physics problem, the computational domain in which the problem 
has to be solved and its geometrical discretization needed for the subsequent
numerical approximation with the GFEM. 

\subsection{Geometry and notation}
Let $\mathbf{x}$ and $t$ denote the spatial and temporal coordinates,
respectively. The simulation domain $\Omega \subset \mathbb{R}^3$
is assumed to be an incompressible, viscous and 
Newtonian fluid moving with velocity
$\mathbf{u}=\mathbf{u}(\mathbf{x},t)$, 
in which $M \geq 1$ chemical species are dissolved. Each chemical has 
effective charge $z_i$, $i=1,...,M$, and concentration (number density)
$n_i = n_i(\mathbf{x}, t)$ [$\unit{m^{-3}}$]. Cations have $z_i >0$, anions
have $z_i <0$ while neutral species have $z_i=0$.

We denote by $\partial \Omega$ the boundary of the domain $\Omega$
and by $\mathbf{n}$ the unit outward normal vector on $\partial \Omega$.
Let $\mathcal{U}=\{ n_i, \varphi, \mathbf{u}, T, \mathbf{d} \}$
denote the set of dependent variables of the problem, 
$\varphi= \varphi(\mathbf{x}, t)$, $T = T(\mathbf{x}, t)$
and $\mathbf{d}=\mathbf{d}(\mathbf{x}, t)$ representing the 
electric potential, temperature and mechanical 
displacement field solutions of the biophysical system under investigation.
Then, with each variable $s \in \mathcal{U}$, we associate a partition
of the domain boundary $\partial \Omega$ into the union of 
(generally different) subsets. 
We indicate by $\Gamma_D^s$ the subset of $\partial \Omega$ where 
a Dirichlet condition 
and by $\Gamma_N^s$ the subset where a Neumann condition is applied, 
in such a way that $\Gamma_D^s \cup \Gamma_N^s = \partial \Omega$ 
and $\Gamma_D^s \cap \Gamma_N^s = \varnothing$. 

\subsection{Geometrical discretization of the computational domain}
In view of the numerical approximation of a given differential problem,
the MP-FEMOS computational platform provides a
flexible Galerkin Finite Element discretization 
on a fully unstructured triangulation $\mathcal{T}_h$ 
of the domain $\Omega \subset \mathbb{R}^3$ 
into tetrahedral elements $K$, such that 
$\underset{K \in \mathcal{T}_h}{\bigcup} \overline{K} = \overline{\Omega}$. 

For all $K \in \mathcal{T}_h$, we let 
$h_K = {\rm diam}(K)$ be the diameter of $K$, defined as the 
longest edge of $K$, and indicate by 
$h = \displaystyle \max_{K \in \mathcal{T}_h} \ h_K$ 
the discretization parameter. 
We assume $\mathcal{T}_h$ to be a regular partition (cf.~\cite{QV}, Chapter 
3), i.e., that there exists a positive constant $\xi$ 
(independent of $h$) such that 
\begin{equation}\label{eq:regularity_Th}
{{h_K}\over{\rho_K}} \leq \xi \qquad \forall K \in \mathcal{T}_h
\end{equation}
$\rho_K$ being the diameter of the largest sphere inscribed in $K$.
Condition~\eqref{eq:regularity_Th} is equivalent to assuming a bound from
below of the minimum mesh angle. 

\section{Mathematical Model}\label{sec:model} 
In this section we illustrate the multi-physics model for
ion electrodiffusion in the biological environment schematically
represented in Fig.~\ref{fig:scheme_problem}.

Ion and fluid motion are mathematically 
described by a coupled systems of PDEs 
including: the velocity-extended Poisson-Nernst-Planck (VE-PNP) 
equation system to account for electro-chemical forces  
and the Stokes equation to account for fluid forces
under the assumption that nonlinear convective effects can be neglected
because of slow fluid motion.

Thermal dispersion into the channel is considered by a heat 
conduction equation that allows to determine
temperature evolution at all $\mathbf{x} \in \Omega$.

To calculate the deformation of the channel wall, 
the domain $\Omega$ is regarded as a compressible linear elastic medium undergoing the classical Navier-Lam\'e theory. At present, 
no direct coupling is activated between stress and deformation fields
and the PNP system. This modeling limitation will be removed 
in a future step of our research.

\subsection{The VE-PNP Poisson-Nernst-Planck system}\label{sec:VE-PNP}

A detailed derivation of the PNP systems can be found, e.g., in~\cite{Mauri2014}. 
By the application of: 1) mass balance for each chemical; 
2) linear momentum balance (second law of dynamics); 
3) Stokes' law for viscous drag exerted by a fluid on a 
particle in motion through it; 4) Einstein's relation; 
and 5) the law of perfect gases, the following system of
PDEs is obtained: \\
$\forall i=1,...,M$, solve:
\begin{subequations}\label{eq:pnpprob}
\begin{align}
& {{\partial n_i} \over{\partial t}} + 
{\rm div \ } \mathbf{f}_i = 0 & \label{eq:massPNP} \\
& \mathbf{f_i} 
= \Frac{z_i}{|z_i|} \mu_i 
n_i \mathbf{E} - D_i \nabla n_i 
- D_i n_i \displaystyle {{\nabla T} \over{T}} 
+ \boxed{n_i \mathbf{u}} & 
\label{eq:momentumPNP} \\
& {\rm div \ } (\epsilon_f \mathbf{E}) = q \sum_{i=1}^M z_i n_i +
q \rho_{fixed} & 
\label{eq:PoissonPNP}\\
& \mathbf{E} = - \nabla \varphi. & \label{eq:efieldPNP}
\end{align}
\end{subequations}
In~\eqref{eq:pnpprob}, the symbol $\mathbf{f}_i$ denotes the 
ion particle flux [$\unit{m^{-2} s^{-1}}$], $\mu_i$ and $D_i$
are the ion electrical mobility and diffusivity and $T$ is system
temperature. In the Poisson equation~\eqref{eq:PoissonPNP}, $\mathbf{E}$ and 
$\varphi$ are the electric field [$\unit{V m^{-1}}$]
and electric potential [$\unit{V}$], respectively,
while $q$ and $\epsilon_f$ denote the electron charge and the
electrolyte fluid dielectric permittivity, respectively.
The quantity $q \rho_{fixed}$ [$\unit{C m^{-3}}$] mathematically accounts for 
the presence of a fixed charge density due to the lipid membrane bilayer 
and is assumed to be a given function of position only.
The diffusion coefficient $D_i$ and the mobility $\mu_i$ are 
proportional through the generalized (because $T$ is \emph{not}
constant) Einstein's relation
\begin{equation}\label{eq:einstein}
D_i = \mu_i {{k_B T}\over{q |z_i|}}
\end{equation}
where $k_B$ is Boltzmann's constant.
Initial conditions for ion concentrations are
$\forall i=1,...,M$:
\begin{subequations}\label{eq:pnpIC}
\begin{align}
&n_i(\mathbf{x},0) = n_i^0(\mathbf{x}) & \ \ \ \mbox{in } \ \Omega &\label{eq:PNP_initial}
\end{align}
\end{subequations}
where the functions $n_i^0$ are positive given data.
The boundary conditions for the VE-PNP system are
$\forall i=1,...,M$:
\begin{subequations}\label{eq:pnpBC}
\begin{align}
&\varphi = \overline{\varphi} &\ \ \ \mbox{on } \ \Gamma_D^{\varphi} &\label{eq:firstPNP}\\ 
&\mathbf{E} \cdot \mathbf{n} = 0 & \ \ \ \mbox{on } \ \Gamma_N^{\varphi}&\label{eq:secondPNP} \\ 
&n_i = \overline{n}_i & \ \ \ \mbox{on } \ \Gamma_D^{n_i} 
&\label{eq:thirdPNP}\\ 
&\mathbf{f_i} \cdot \mathbf{n} = 0 & \ \ \  \mbox{on }
\ \Gamma_N^{n_i} &\label{eq:fourthPNP}
\end{align}
\end{subequations}
where $\overline{\varphi}$ is the electrostatic potential of the 
side $\Gamma_D^{\varphi}$ and $ \overline{n}_i$ is a 
given concentration of the chemical species $i$ on side 
$\Gamma_D^{n_i}$.
Conditions~\eqref{eq:firstPNP} and~\eqref{eq:thirdPNP} enforce respectively 
a given voltage and a given concentration on the side, while~\eqref{eq:secondPNP} 
and~\eqref{eq:fourthPNP} enforce a homogeneous Neumann condition, 
that corresponds for chemical particles to the conservation of mass 
concentration inside the simulation domain 
(i.e. particles cannot leave the domain).

The VE-PNP equation system can be regarded as a generalization of 
the classic Drift-Diffusion (DD) model for semiconductor devices~\cite{selberherr,markowich1986stationary,Markowich,JeromeBook} 
through the addition of the thermomigration and fluid velocity terms
in the linear momentum balance equation~\eqref{eq:momentumPNP}.
As a matter of fact, the third term in~\eqref{eq:momentumPNP}
is related to the thermal grandient effect on ion
flow while the last term gives rise to an additional
translational contribution due to fluid motion
and is highlighted by a box. This term 
represents the coupling between electrolyte fluid motion
and electrodiffusive ion transport. Henceforth, we use the reference
"PNP system" every time the contribution of the velocity and of the
thermal gradient is neglected in the equation set~\eqref{eq:pnpprob}.

\subsection{The Stokes system}\label{sec:Stokes}

The Stokes system to describe the slow motion of an incompressible 
and viscous fluid with a constant density $\rho_f$  
[$\unit{Kg \ m^{-3}}$] consists of the following PDEs (see~\cite{QV}
for a complete mathematical and numerical treatment):
\begin{subequations}\label{eq:stpr}
\begin{align}
& { \rm div \ } \mathbf{u} = 0 & \label{eq:contStokes} \\
& 
\rho_f \displaystyle {{\partial \mathbf{u}}\over{\partial t}} =
{ \rm div \ } \underline{\underline{\sigma}}(\mathbf{u},p) +
\boxed{q \sum_{i=1}^M z_i n_i \mathbf{E}} & \label{eq:momentumStokes} \\
& \underline{\underline{\sigma}}(\mathbf{u},p) = 2 \mu_f \underline{\underline{\epsilon}}(\mathbf{u}) - 
p \underline{\underline{ \delta}} & \label{eq:stressStokes} \\
& \underline{\underline{\epsilon}}({\mathbf{u}}) = 
\underline{\underline{\nabla}}_{\, s}({\mathbf{u}}) =
{{1}\over{2}} (\nabla \mathbf{u} + (\nabla \mathbf{u})^T) & \label{eq:strainStokes}  
\end{align}
\end{subequations}
where $\mathbf{u}$ is the fluid velocity [$\unit{m s^{-1}}$], 
$p$ is the fluid pressure [$\unit{Pa}$], 
$\mu_f$ the fluid shear viscosity [$\unit{Kg \ m^{-1} s^{-1}}$], $\underline{\underline{\sigma}}$ 
the stress tensor [$\unit{Pa}$] and $\underline{\underline{\epsilon}}$ 
the strain rate tensor [$\unit{s^{-1}}$].
The symbol $\underline{\underline{ \delta}}$ is the second-order 
identity tensor of dimension 3 whereas the second-order tensor 
$\underline{\underline{\nabla}}_{\, s}
({\mathbf{u}})$ is the symmetric gradient of $\mathbf{u}$.
Notice that in accordance with the assumption of slow 
fluid motion, the quadratic convective term in the inertial forces has 
been neglected in the momentum balance equation~\eqref{eq:momentumStokes}.
The boxed term at the right-hand side of equation~\eqref{eq:momentumStokes}
physically corresponds to the electric pressure exerted 
by the ionic charge on the electrolyte fluid, and mathematically
represents the coupling between electrodiffusive ion transport and 
electrolyte fluid motion.
The initial condition for electrolyte fluid velocity is
\begin{subequations}\label{eq:stIC}
\begin{align}
&\mathbf{u}(\mathbf{x},0) = \mathbf{u}^0(\mathbf{x}) 
& \ \ \ \mbox{in } \ \Omega &\label{eq:st_initial}
\end{align}
\end{subequations}
where the function $\mathbf{u}^0$ is a given datum, usually set equal to
zero. The boundary conditions for the Stokes system are:
\begin{subequations}\label{eq:stBC}
\begin{align}
&\mathbf{u} = \mathbf{g} & \ \ \ \mbox{on } \ \Gamma_D^{\mathbf{u}} &\label{eq:firstST}\\ 
&\underline{\underline{\sigma}}(\mathbf{u}, p)\mathbf{n} 
= \mathbf{h} & \ \ \ \mbox{on } \ \Gamma_N^{\mathbf{u}}&\label{eq:secondST}
\end{align}
\end{subequations}
where $\mathbf{g}$ is the imposed velocity on the side 
$\Gamma_D^{\mathbf{u}}$ and $\mathbf{h}=\overline{p}\mathbf{n}$ 
is the value of the normal stress on $\Gamma_N^{\mathbf{u}}$.
Relation~\eqref{eq:firstST} is the wall adherence condition 
(typically $\mathbf{g} = \mathbf{0}$) 
and~\eqref{eq:secondST} is the application of the 
action-reaction principle for the normal 
stress $\underline{\underline{\sigma}}\mathbf{n}$.
The presence of the two boxed terms  
in the linearized momentum balance equations~\eqref{eq:momentumPNP} 
and~\eqref{eq:momentumStokes}
terms requires the adoption of a suitable iterative procedure
to successively solve the whole coupled 
system~\eqref{eq:pnpprob}-~\eqref{eq:stpr} 
as described in sec.~\ref{sec:algorithms}.

\subsection{Heat equation}

Ion and fluid motion, as well as the action of mechanical forces,
may dissipate energy throughout the channel. 
The evolution of the resulting channel thermal profile 
$T = T(\mathbf{x},t)$ in $\Omega$ is obtained
by solving the thermal conduction equation (see~\cite{landaufluid}) 
given by
\begin{subequations}\label{eq:Tpr}
\begin{align}
& {{\partial (\rho_f c T)}\over{\partial t}} + 
{ \rm div \ } \mathbf{q} = Q_{heat}  & \label{eq:Teq} \\
& \mathbf{q} = - k \nabla T & \label{eq:Tfluxeq} 
\end{align}
\end{subequations}
where $c$ is the specific heat [$m^2 s^{-2} K^{-1}$], 
$\mathbf{q}$ is the heat flux [$W m^{-2}$], $k$ is 
the thermal 
conductivity [$W m^{-1} K^{-1}$] and $Q_{heat}$ is 
the heat generation term [$W m^{-3}$]. We notice that
convective thermal effects due to 
fluid velocity are neglected in the mathematical definition of the heat flux 
$\mathbf{q}$. Also, we notice that the heat production term $Q_{heat}$ is the 
result of heat source/sink processes such as Joule heating or 
(possibly nonlinear) thermo-chemical reactions due to 
the administration of drugs as in TRP channels.
For the purpose of the present work, $Q_{heat}$ is set equal to 
zero, but this limitation will be removed in a subsequent step
of the research.
The initial condition for electrolyte fluid temperature is
\begin{subequations}\label{eq:TIC}
\begin{align}
&T(\mathbf{x},0) = T^0(\mathbf{x}) 
& \ \ \ \mbox{in } \ \Omega &\label{eq:T_initial}
\end{align}
\end{subequations}
where the function $T^0$ is a positive given datum. 
The boundary conditions for the channel heat equation are:
\begin{subequations}\label{eq:bcC}
\begin{align}
&T = \overline{T}& \ \ \ \mbox{on } \ \Gamma_D^{T} &\label{eq:firstTE}\\ 
&\mathbf{q} \cdot \mathbf{n} = 0 & \ \ \ 
\mbox{on }\ \Gamma_N^{T}&\label{eq:secondTE}
\end{align}
\end{subequations}
where $\overline{T}$ is the imposed temperature on the side 
$\Gamma_D^{T}$.
Relation~\eqref{eq:firstTE} represents 
the effect of an infinite heat source at temperature 
$T=\overline{T}$ while~\eqref{eq:secondTE} is the adiabatic condition.

\subsection{The Navier-Lam\'e approach to mechanical equilibrium}

The mechanical problem of channel deformation is here described 
by the model of linear isotropic elasticity 
including thermal expansion and represented by the following system
expressing the equilibrium condition and 
the generalized Hooke's law:
\begin{subequations} \label{eq:MECpr}
\begin{align}
& {\rm div \ } \vect{\underline{\underline{\sigma}}}_{mec} (\vect{d}) + \vect{f_{mec}}= 0  &  \label{eq:MECeq} \\ 
&\vect{\underline{\underline{\sigma}}}_{mec} (\vect{d};T) = \underline{\underline{\textit{C}}} (\underline{\underline{\varepsilon}}_{mec} (\vect{d}) - \underline{\underline{\varepsilon}}_{mec}^{th} (T))+ \underline{\underline{\sigma_0}} & \label{eq:MECstress} \\
&\underline{\underline{\varepsilon}}_{mec}:= \nabla_s(\vect{d}) & 
\label{eq:MECdef}
\end{align}
\end{subequations}
where $\vect{d}=[d_x,d_y,d_z]^T$ is the displacement [$m$], $\underline{\underline{\sigma}}_{mec}$ is
the stress tensor [$Pa$], $\vect{f}_{mec}$ is the volumetric 
force density [$N/m^3$], 
$\underline{\underline{\varepsilon}}_{mec}= \underline{\underline{\varepsilon}}_{mec}^{th} + \underline{\underline{\varepsilon}}_{mec}^{el}$ is 
the total strain tensor, $\underline{\underline{\varepsilon}}_{mec}^{el}$ 
is the elastic strain tensor, $\underline{\underline{\varepsilon}}_{mec}^{th}$ 
is the thermal strain tensor, $\underline{\underline{\textit{C}}}$ 
is the elastic tensor [$Pa$] and $\underline{\underline{\sigma_0}}$ is the initial stress [$Pa$].
In particular, if the thermal properties are homogeneous and orthotropic the thermal deformation
(strain) is given by the relation
\begin{align*}
\underline{\underline{\varepsilon}}_{mec,ii}^{th} = \alpha_i \cdot(T -T_{ref})
\end{align*}
where $\alpha_1$, $\alpha_2$ and $\alpha_3$ [$K^{-1}$] are the linear 
thermal expansion coefficients, and $T_{ref}$ is the reference 
temperature [$K$]. Under standard assumptions (symmetric 
stress and strain, symmetric and positive definite 
energy functional and coordinate invariance of mechanical response) 
the relation between stress and elastic strain becomes
\begin{align}
\underline{\underline{\sigma}}_{mec,ij} = 2\mu \varepsilon^{el}_{mec,ij} + \lambda \varepsilon^{el}_{mec,kk} \quad i,j=1,2,3 & \label{eq:Hooke} 
\end{align}
where 
$$
\lambda= \frac{\nu E}{(1+\nu)(1-2\nu)}\qquad \quad
\mu= \frac{E}{2(1+\nu)} 
$$
are the Lam\'e coefficients, while $0<\nu< 0.5$ is the Poisson ratio 
and $E > 0$ is the Young module [$Pa$] describing the material mechanical properties. 
In the mechanical system, at each time level $t$, 
the boundary conditions are expressed by a given displacement 
on side $\Gamma_D^{\mathbf{d}}$ or by a stress free surface on side $\Gamma_N^{\mathbf{d}}$:
\begin{subequations}\label{eq:bcs_mech}
\begin{align}
&\mathbf{d} = \mathbf{d}_D & \ \ \ \mbox{on } \ \Gamma_D^{\mathbf{d}} &\label{eq:firstNL}\\ 
&\underline{\underline{\sigma}}(\mathbf{d}; T)\mathbf{n} = 0 & \ \ \ 
\mbox{on } \ \Gamma_N^{\mathbf{d}}&\label{eq:secondNL}
\end{align}
\end{subequations}
where $\mathbf{d}_D$ is a given boundary displacement.

\subsection{Fully coupled multi-physical approach}

The multi-physics description of biological ion nanochannels is hence
obtained by combining together all the previous blocks as follows:
\begin{itemize}
\item Mechanical description ~\eqref{eq:MECeq}
\item Ionic transport~\eqref{eq:pnpprob}
\item Thermal description~\eqref{eq:Teq}
\item Fluid velocity calculation~\eqref{eq:stpr}
\end{itemize}
We define this as the the Fully Coupled Thermal Velocity Extended 
Poisson Nernst Planck Stokes system
in presence of mechanical forces or, shortly, FC-MF-T-VE-PNP-S model.
To the best of our knowledge, such a formulation represents the first 
example of a multi-physics based formulation for ion channel modeling
and simulation. 

\section{Solution Algorithm}\label{sec:algorithms}

In view of the numerical solution of the FC-MF-T-VE-PNP-S system,
for a given time $T_f>0$ (final simulation time), we introduce the 
time interval $I_{T_f}:= [0, T_f]$ and we divide $I_{T_f}$ into 
a uniform partition of $N_{T_f} \geq 1$ time slabs 
$\tau^k = (t^{k-1}, t^{k})$ of width $\Delta t = T_f/N_{T_f}$, 
such that $\cup_k \tau^k = I_{T_f}$. Then, for each $k =0, 
\ldots, N_{T_f}$, we denote by: 
\begin{subequations}
\begin{align}
& \mathbf{U}_{MEC} = \left[
d_x, d_y, d_z \right] & 
\label{eq:solutionvectorMEC} \\
& \mathbf{U}^{k}_{PNP} = \left[
n_1^{k}, n_i^{k}, \ldots, n_M^{k}, \varphi^{k}, T^k\right] & 
\label{eq:solutionvectorPNP} \\
& \mathbf{U}^{k}_{Stokes} = \left[
\mathbf{u}^{k}, p^{k}\right] & 
\label{eq:solutionvectorStokes} \\
& \mathbf{U}^{k} = \left[ \mathbf{U}^{k}_{PNP}, \mathbf{U}^{k}_{Stokes} 
\right]^T & \label{eq:solutionvector}
\end{align}
\end{subequations}
the solution vectors at time level $t^k$ 
of the Mechanical block and those of the 
Thermal PNP block, of the Stokes block
and of the whole coupled problem. 
The solution of the FC-MF-T-VE-PNP-S system
using a monolithic scheme is a formidable computational task.
Thus, the use of a decoupled approach is highly desirable.
For each discrete time level $t^k$, $k=1, \ldots, N_{T_f}$, 
a two-subblock iteration is carried out. 
In the first step of the iteration
the mechanical problem is solved for the displacement 
$\mathbf{U}_{MEC}$. In the second step of the 
iteration $\mathbf{U}_{MEC}$ 
is used to compute the corresponding deformation 
$\theta = \theta(\mathbf{U}_{MEC})$ providing a new domain configuration
in which the remaining thermal, electrochemical and fluid 
blocks of the problem are successively solved. In doing this, 
temporal semidiscretization is performed using different temporal schemes: 
a) one-step methods: Backward Euler (BE) and
Trapezoidal Rule (TR); b) two-step method: TR-BDF2. We refer
to~\cite{quarteroni2007numerical}, Chapter 11, 
for a detailed discussion and analysis of these methods.

The flow-chart of the 
iteration map that is implemented to solve successively each block
of the FC-MF-T-VE-PNP-S system is illustrated in 
Fig.~\ref{fig:gummelchart} where  
$k = 0, \ldots N_{T_f}-1$ is the temporal loop counter 
and $j$ is the iteration counter in the PNP cycle.
In detail, the successive solution of the T-VE-PNP-S system
proceeds as follows.
\begin{description}
\item[Step 1:] solve successively 
the PNP system~\eqref{eq:pnpprob} and~\eqref{eq:Teq}
using $\mathbf{u}=\mathbf{u}^k$, until self-consistency is obtained 
among
$n_i$, $\varphi$ and $T$. This step is the Thermal-PNP cycle and returns 
$\mathbf{U}^{k+1}_{PNP}$.
\item[Step 2:] solve the Stokes system~\eqref{eq:stpr} with
$\mathbf{E}= -\nabla \varphi^{k+1}$ using the Uzawa iterative 
algorithm (see~\cite{QV}, Chapter 9) or a direct method. 
This step is the Stokes cycle
and returns $\mathbf{U}^{k+1}_{Stokes}$.
\end{description}

\begin{figure}[htbp]
\centering
\includegraphics[width=0.5\linewidth]{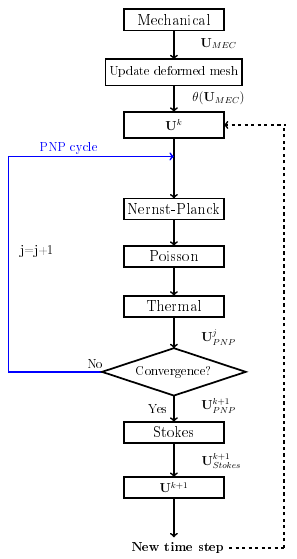}
\caption{Flow chart of the solution algorithm.}
\label{fig:gummelchart}
\end{figure}
The criterion adopted to monitor the convergence of the
solution map of Fig.~\ref{fig:gummelchart} is to stop the algorithm 
at the first value $j^{*} > 0$ of the iteration counter $j$ such that
\begin{equation}
\left\| U^{(j^{*}+1)}_{PNP}-U^{(j^{*})}_{PNP} \right\|_2 < toll
\label{eq:norm1}
\end{equation}
where $toll$ is a prescribed tolerance, and
\begin{equation}
\left\| \textbf{w}\right\|_2 = \left(\sum^p_{i=1} w_i^2 \right)^{1/2}
\label{eq:norm2}
\end{equation}
is the $2-$norm of a vector $\textbf{w} \in \mathbb{R}^p$. 
In the numerical experiments we set $toll=10^{-3}$.
The above described solution map is a generalization of the
classic Gummel decoupled algorithm widely used in contemporary 
semiconductor device simulation tools for the DD model, 
and analyzed in detail
in the stationary case in~\cite{markowich1986stationary,JeromeBook} 
and in~\cite{Markowich} in the time-dependent case.
The convergence rate of the generalized Gummel algorithm is linear;
however, there are two main advantages by using such an approach 
instead of applying, for instance, a fully coupled Newton Method.
A first advantage is that in the case of the Gummel method 
memory cost is strongly reduced so that
also the overall computational effort is much less intense. 
A second advantage is that the
Gummel method is more insensitive to the choice of the initial guess
than Newton's iteration. This fact is particularly relevant 
in multi-dimensional (and, especially, multi-physics) 
problems, like that under 
consideration, where the construction of an appropriate 
initial guess is in general not an easy task, sometimes even impossible. 

\section{Weak formulation and well-posedeness analysis}\label{sec:weak}

In this section we discuss the weak formulation 
and the well-posedeness of each differential subblock 
in the solution algorithm of sec.~\ref{sec:algorithms}.
In sec.~\ref{TVEPNP_well} we study the T-VE-PNP system,
the Stokes system in sec.~\ref{Stokes_well}, 
and the Navier Lam\'e formulation for the mechanical displacement
in sec.~\ref{Mech_well}.
The analysis of the Poisson equation~\eqref{eq:PoissonPNP} and heat equation~\eqref{eq:Teq} is standard and can be found, e.g., 
in~\cite{hughesbook,QV}.
For ease of notation, we set $\xi^k:= \xi(\mathbf{x}, t^k)$, 
$k = 0, \ldots, N_{T_f}$, for any function $\xi = \xi(\mathbf{x}, t)$.
To avoid technicalities in the subsequent analysis we assume, without
loss of generality, that the Dirichlet data $\overline{n}_i$ 
(T-VE-PNP system), $\mathbf{g}$ (Stokes system) and $\mathbf{d}_D$ (mechanical system) are equal to zero. Moreover, we restrict the study to 
the case where the BE method 
is used for time discretization as the analysis proceeds in a similar manner
if the TR or the TR-BDF2 scheme is employed.

\subsection{T-VE-PNP system}\label{TVEPNP_well}

After time discretization, by using the Einstein relation~\eqref{eq:einstein}
in~\eqref{eq:momentumPNP} and the boundary conditions
~\eqref{eq:massPNP} and~\eqref{eq:momentumPNP},
the T-VE-PNP system takes the following form:
$\forall k = 0, \ldots, N-1$ and for all $j \geq 0$ until convergence
of the PNP cycle of Fig.~\ref{fig:gummelchart}, solve:
\begin{subequations}\label{eq:pforte}
\begin{align}
& \displaystyle {{ n_i^{j+1} - n_i^k}\over{\Delta t}} 
+ { \rm div \ } \mathbf{f}_{n_i}^{j+1} = 0 & \mbox{in } \ \Omega & \label{eq:pforte_cont} \\
& \mathbf{f}_{n_i}^{j+1} = -{D}_i^{j} 
\left(\nabla n_i^{j+1} + 
\displaystyle{{q z_i}\over{k_B T^{j}}} n_i^{j+1}
\nabla \varphi^{j+1}
+ n_i^{j+1} \displaystyle {{\nabla T^{j}}\over{T^{j}}}\right) + 
{n}_i^{j+1} \mathbf{u}^k & \mbox{in } \ \Omega & \label{eq:pforte_flux} \\
& \mathbf{f}_{n_i}^{j+1} \cdot \mathbf{n} = 0	& 
\mbox{on } \ \Gamma_N^{n_i} & \label{eq:pforte_bc_N} \\
& n_i^{j+1} = 0 & \mbox{on } \ \Gamma_D^{n_i} &
\label{eq:pforte_bc_D}
\end{align}
\end{subequations}
where $\varphi^{j+1}$ is a given function 
computed by solving the Poisson equation~\eqref{eq:PoissonPNP}, 
$T^{j}$ is the solution of the heat equation~\eqref{eq:Tpr} and
$D^j_i$ is the diffusion coefficient of species $i$ computed using~\eqref{eq:einstein} with $T = T^{j}$.

The equation pair~\eqref{eq:pforte_cont}-\eqref{eq:pforte_flux}
is of the admissible form (8.1) of~\cite{gilbarg1998elliptic}.
However, it is immediate to check that the second a priori
estimate in formula (8.6) of~\cite{gilbarg1998elliptic} is, in general, 
not satisfied preventing a theoretical study of the well posedness of the equation system~\eqref{eq:pforte}. To overcome this impasse, 
and also in view of the finite element discretization 
of~\eqref{eq:pforte}, we replace
the expression~\eqref{eq:pforte_flux} with a suitable approximation.
To this purpose, 
for any function $q : \mathcal{T}_h \rightarrow \mathbb{R}$
such that $q \in W^1(K)$ for all $K \in \mathcal{T}_h$, we denote the restriction of $q$ to an element $K \in \mathcal{T}_h$ by $q|_K$ and define 
the piecewise gradient operator $\nabla_{\mathcal{T}_h} q : \mathcal{T}_h \rightarrow (\mathbb{R})^3$ as the vector-valued function such that
$$
\left(\nabla_{\mathcal{T}_h} q\right)|_K:= 
\nabla \left(q|_K\right) \qquad \forall K \in \mathcal{T}_h.
$$
Finally, we associate with the function $q|_K$ its {\emph harmonic
average} $H_K(q)$ defined as
$$
H_K(q) : = \left( \Frac{\int_K q^{-1} \, dK}{|K|} \right)^{-1}
\qquad \forall K \in \mathcal{T}_h.
$$
Then, the approximate form of the flux~\eqref{eq:pforte_flux} that is
used in this article is
\begin{equation}\label{eq:approx_flux_pnp}
\mathbf{f}_{n_i}^{k+1} = -{D}_i^{j} 
\displaystyle  \left(\nabla n_i^{j+1} + 
\Frac{q z_i}{k_B H_{\mathcal{T}_h}(T^{j})} 
n_i^{j+1}\nabla \varphi^{j+1} + n_i^{j+1} 
\Frac{\nabla T^{j}}{T^{j}} \right) + n_i^{j+1} \mathbf{u}^k.
\end{equation}
The above definition corresponds to having replaced in the electrical 
drift term of the flux the local environmental temperature with
its harmonic average. It is well known (see~\cite{BabuskaMixMet})
that the use of the harmonic average for 
rapidly varying functions provides much more accurate results than
the usual integral average. 
It is expected that as the mesh size $h \rightarrow 0^+$
also the modeling error introduced by using~\eqref{eq:approx_flux_pnp}
instead of~\eqref{eq:pforte_flux} decreases accordingly.

We now show that the equation system~\eqref{eq:pforte} in which
the flux~\eqref{eq:pforte_flux} is replaced by~\eqref{eq:approx_flux_pnp}
is uniquely solvable.
Let us introduce the following Hilbert space (see~\cite{QV} Chapter 1) 
\begin{equation}\label{eq:V_u}
V^{n_i} \equiv H^1_{\Gamma_D^{n_i}}(\Omega) = \{ v \in H^1(\Omega) : v|_{\Gamma_D^{n_i}} = 0 \}.
\end{equation}
Multiplying~\eqref{eq:pforte_cont} by 
a test function $v \in V^{n_i}$, integrating over $\Omega$ 
and enforcing the boundary conditions~\eqref{eq:pforte_bc_D} and~\eqref{eq:pforte_bc_N}, 
we obtain the weak formulation of the T-VE-PNP system: 
$\forall k = 0, \ldots, N-1$ and for all $j \geq 0$ until convergence
of the PNP cycle of Fig.~\ref{fig:gummelchart}:
\textit{find ${n}_i^{j+1}\in V^{n_i}$ such that}
\begin{align}\label{eq:vPpnp}
& \Frac{1}{\Delta t} 
( {n}_i^{j+1}, v ) +  a^{n_i}({n}_i^{j+1}, v) = 
F^{n_i}(v) & \forall v \in V^{n_i}
\end{align}
where:
\begin{equation*}
\begin{split}
\centering
({n}_i^{j+1},v) &= \int_{\Omega} {n}_i^{j+1} v \ d\Omega  \\
a^{n_i}({n}_i^{j+1},v) &= \int_{\Omega} D_i^{j} 
\nabla {n}_i^{j+1} \cdot 
\nabla v \ d\Omega + 
\int_{\Omega} {{q D_i^{j} z_i}
\over{k_B H_{\mathcal{T}_h}(T^{j})}} 
{n}_i^{j+1} \nabla \varphi^{j+1} \cdot 
\nabla v \ d\Omega \\
&+ \int_{\Omega} D_i^{j} {n}_i^{j+1}
\Frac{\nabla T^{j}}{T^{j}} \cdot \nabla v \ d\Omega - 
\int_{\Omega} {n}_i^{j+1} \mathbf{u}^k \cdot \nabla v \ d\Omega \\
F^{n_i}(v) &= \Frac{1}{\Delta t} \int_{\Omega} {n}_i^{k} v \ d\Omega.
\end{split}
\end{equation*} 
We notice that letting $\Delta t \rightarrow +\infty$ allows us 
to recover the steady-state problem. Using a standard technique 
in the analytical and computational study of the DD model for 
semiconductors, we introduce the change of dependent variable 
\begin{equation}\label{eq:changeVarpnp}
\tilde{n}_i = n_i e^{- \Psi}
\end{equation}
where $\Psi : \mathcal{T}_h \rightarrow \mathbb{R}$ is 
a dimensionless potential such that $\Psi|_K \in W^{1,\infty}(K)$
for each $K \in \mathcal{T}_h$, defined as
\begin{equation}\label{eq:psipnp}   
\Psi|_K = - \left[ \Frac{q z}{k_B H_K(T)} \varphi|_K 
+ \ln \left( \Frac{T|_K}{T_{ref}} \right) \right] \qquad \forall K \in \mathcal{T}_h
\end{equation}
$T_{ref}$ being a reference temperature (typically $T_{ref} = 300 \unit{K}$).
Substituting~\eqref{eq:changeVarpnp} into the expression for the
flux~\eqref{eq:pforte_flux} yields
\begin{equation}\label{eq:pnpflux_symm}
\mathbf{f}_{n_i}^{j+1} = - 
{D}_i^{j} e^{\Psi^{j}} \nabla \tilde{n}_i^{j+1} + 
\tilde{n}_i^{j+1}\mathbf{u}^k.
\end{equation}
The equivalent form of the flux shows that 
$\mathbf{f}_{n_i}^{j+1}$ is composed by the
sum of two terms, one representing the Fick law of diffusion of the
novel species $\tilde{n}_i^{j+1}$ with the 
modified diffusion coefficient ${D_i}^{j} e^{\Psi^{j}}$, the other 
representing passive convection of $\tilde{n}_i^{j+1}$ driven by the
electrolyte fluid velocity.

\begin{remark}
We notice that in the isothermal case the potential $\Psi$ coincides
with the electric potential $\varphi$ and the change of 
variable~\eqref{eq:changeVarpnp} is nothing but the standard use of
the so-called Slotboom variable very common in the study and numerical 
solution of the DD model 
(see~\cite{selberherr,markowich1986stationary,JeromeBook}).
\end{remark}
\noindent
We also make the hypothesis that, 
$\forall j \geq 0$ until convergence, 
$$
0 < {D}_i^{j,min} \leq D_i^{j}(\mathbf{x},t) \leq 
{D}_i^{j,max} \qquad \mbox{almost everywhere in } \Omega.
$$
Now, replacing~\eqref{eq:changeVarpnp} and~\eqref{eq:psipnp}
in~\eqref{eq:vPpnp}, we obtain the weak formulation of the T-VE-PNP
system:
$\forall k = 0, \ldots, N-1$ and for all $j \geq 0$ until convergence
of the PNP cycle of Fig.~\ref{fig:gummelchart}: \\
\textit{find $\tilde{n}^{j+1}_i$ $\in V^{n_i}$ such that}
\begin{equation}\label{eq:wppnp}
b^{{n}_i}(\tilde{n}_i^{j+1}, v) = F^{n_i}(v) \qquad \forall v \in V^{n_i}
\end{equation}
with
\begin{equation}\label{eq:bilinear_form_b}
\begin{split}
\centering
b^{{n}_i}(\mathcal{N},v) &= \Frac{1}{\Delta t} 
\int_{\Omega} \mathcal{N} e^{\Psi^{j}} v \ d\Omega + 
\int_{\Omega} {D}_i^{j} e^{\Psi^{j}} \nabla \mathcal{N} \cdot 
\nabla v \ d\Omega  \\
& - \int_{\Omega} \mathcal{N} e^{\Psi^{j}} 
\mathbf{u}^k \cdot \nabla v \ d\Omega \qquad \forall 
\mathcal{N},v \in V^{n_i}.
\end{split}
\end{equation}

\begin{theorem}\label{theo:pnp}
The weak problem~\eqref{eq:wppnp} is well posed and its solution 
satisfies the following stability estimate
\begin{equation}\label{eq:stability_estimate_pnp}
|| \tilde{n}^{j+1}_i ||_1 \leq e^{-\Psi^{min}} \Frac{||n^k_i||_0}
{\min \left\{1 , \Delta t D_i^{j+1,min} \right\}}.
\end{equation}
\end{theorem}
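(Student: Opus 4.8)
The plan is to recast \eqref{eq:wppnp} as an abstract variational problem on the Hilbert space $V^{n_i}$ and to invoke the Lax--Milgram theorem, so that the entire argument reduces to verifying the continuity and coercivity of $b^{n_i}(\cdot,\cdot)$ together with the continuity of the right-hand side $F^{n_i}$. As a preliminary step I would record two uniform bounds. Since $\Psi|_K\in W^{1,\infty}(K)$ for every $K\in\mathcal{T}_h$ and the data $\varphi^{j}$, $T^{j}$ entering \eqref{eq:psipnp} are bounded with $T^{j}$ bounded away from zero, the weight is essentially bounded from above and below,
\[
0 < e^{\Psi^{min}} \le e^{\Psi^{j}(\mathbf{x})} \le e^{\Psi^{max}} < +\infty \qquad \text{a.e. in }\ \Omega,
\]
and $\mathbf{u}^k\in (L^\infty(\Omega))^3$ by the regularity of the Stokes solution. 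Together with the assumed two-sided bound on $D_i^{j}$, the Cauchy--Schwarz and Poincar\'e inequalities then give continuity of $b^{n_i}$ on $V^{n_i}\times V^{n_i}$ and of $F^{n_i}$, with $\|F^{n_i}\|_{(V^{n_i})'}\le \frac{1}{\Delta t}\|n_i^k\|_0$.

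The heart of the matter is coercivity, which I would obtain by testing the form with $v=\mathcal{N}$. The mass and diffusion terms are nonnegative and are bounded below, using the lower bounds on $e^{\Psi^{j}}$ and $D_i^{j}$, by $e^{\Psi^{min}}\big(\frac{1}{\Delta t}\|\mathcal{N}\|_0^2 + D_i^{j,min}\|\nabla\mathcal{N}\|_0^2\big)$. The main obstacle is the convective term, which is neither symmetric nor sign-definite, and the crux of the proof is to show that it makes no contribution to $b^{n_i}(\mathcal{N},\mathcal{N})$. Writing $\mathcal{N}\nabla\mathcal{N}=\tfrac12\nabla(\mathcal{N}^2)$ and integrating by parts, this term decomposes into a volume integral proportional to $\operatorname{div}\mathbf{u}^k$ and a boundary integral on $\partial\Omega$. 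The volume part vanishes because the Stokes velocity is solenoidal, $\operatorname{div}\mathbf{u}^k=0$ by \eqref{eq:contStokes}, while the boundary part vanishes since $\mathcal{N}\in V^{n_i}$ forces $\mathcal{N}|_{\Gamma_D^{n_i}}=0$ and the wall-adherence condition \eqref{eq:firstST} gives $\mathbf{u}^k\cdot\mathbf{n}=0$ on the complementary portion of the boundary. This is the delicate point: it is precisely here that the incompressibility of the electrolyte and the compatibility of the fluid and ionic boundary partitions enter, and it is here that the elementwise, harmonic-average definition of $\Psi$ in \eqref{eq:psipnp} must be treated carefully so as not to disturb the cancellation.

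Once convection is disposed of, coercivity holds with constant $\alpha = e^{\Psi^{min}}\min\{1/\Delta t,\,D_i^{j,min}\}$, namely
\[
b^{n_i}(\mathcal{N},\mathcal{N}) \ge e^{\Psi^{min}}\min\Big\{\tfrac{1}{\Delta t},\, D_i^{j,min}\Big\}\,\|\mathcal{N}\|_1^2 \qquad \forall\,\mathcal{N}\in V^{n_i},
\]
and the Lax--Milgram theorem yields the existence and uniqueness of $\tilde{n}_i^{j+1}\in V^{n_i}$ together with $\|\tilde{n}_i^{j+1}\|_1 \le \|F^{n_i}\|_{(V^{n_i})'}/\alpha$. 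Inserting the bound on $\|F^{n_i}\|_{(V^{n_i})'}$ and rewriting $\min\{1/\Delta t,\,D_i^{j,min}\}=\frac{1}{\Delta t}\min\{1,\,\Delta t\, D_i^{j,min}\}$ cancels the common factor $1/\Delta t$ and produces exactly the stability estimate \eqref{eq:stability_estimate_pnp}. I expect the only genuinely subtle step to be the vanishing of the convective term; the continuity bounds, the identification of the coercivity constant and the final algebraic simplification are routine.
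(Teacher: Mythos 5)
Your overall route coincides with the paper's: Lax--Milgram on $V^{n_i}$, continuity of $F^{n_i}$ with $\|F^{n_i}\|_{(V^{n_i})'}\le \frac{1}{\Delta t}\|n_i^k\|_0$, continuity of $b^{n_i}$, coercivity with constant $e^{\Psi^{min}}\min\{1/\Delta t,\,D_i^{j,min}\}$, and the same final algebra yielding \eqref{eq:stability_estimate_pnp}. The genuine gap is in the one step you yourself call the heart of the matter. In \eqref{eq:bilinear_form_b} the convective term carries the weight $e^{\Psi^{j}}$, so with $v=\mathcal{N}$ you must handle
\begin{equation*}
-\int_{\Omega} e^{\Psi^{j}}\,\mathcal{N}\,\mathbf{u}^k\cdot\nabla\mathcal{N}\,d\Omega
= -\frac{1}{2}\int_{\Omega} e^{\Psi^{j}}\,\mathbf{u}^k\cdot\nabla\bigl(\mathcal{N}^2\bigr)\,d\Omega .
\end{equation*}
Integrating by parts here does \emph{not} produce a volume term proportional to $\mathrm{div}\,\mathbf{u}^k$: by the product rule, after using \eqref{eq:contStokes} one is left with $\mathrm{div}\bigl(e^{\Psi^{j}}\mathbf{u}^k\bigr)=e^{\Psi^{j}}\,\mathbf{u}^k\cdot\nabla\Psi^{j}$, so a sign-indefinite remainder $\frac{1}{2}\int_\Omega \mathcal{N}^2\, e^{\Psi^{j}}\,\mathbf{u}^k\cdot\nabla\Psi^{j}\,d\Omega$ survives; moreover, since $\Psi^{j}$ is only elementwise $W^{1,\infty}$ (cf.~\eqref{eq:psipnp}), the weight $e^{\Psi^{j}}$ jumps across interelement faces and integration by parts also generates face integrals. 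None of these terms vanish by incompressibility, nor can they be absorbed by the mass and diffusion terms without a smallness condition on $\mathbf{u}^k$, $\nabla\Psi^{j}$ or $\Delta t$. Hence the asserted exact cancellation, and with it your unconditional coercivity constant, does not follow; remarking that the weight ``must be treated carefully so as not to disturb the cancellation'' names the obstruction but does not remove it.

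The paper's proof is organized precisely to remove the weight \emph{before} invoking incompressibility: it bounds the convective contribution from below by $-e^{\Psi^{max}}\int_\Omega\left(\mathcal{N}\,\mathbf{u}^k\cdot\nabla\mathcal{N}\right)^{+} d\Omega$, a weight-free integrand, and then argues that this integral vanishes through a divergence identity built with the Heaviside function, using $\mathrm{div}\,\mathbf{u}^k=0$ and $\mathbf{u}^k=\mathbf{0}$ on $\Gamma_N^{n_i}$. One may object that this positive-part device is itself delicate (the distributional derivative of the Heaviside factor across the boundary of the set where the integrand is positive is discarded), but the structural point is the one your argument misses: with a spatially varying $\Psi^{j}$ the convection term is not skew-symmetric with respect to the weighted pairing, and some mechanism for detaching $e^{\Psi^{j}}$ from the integrand is indispensable. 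As written, your cancellation argument is complete only in the isothermal, zero-field case where $\Psi^{j}$ is constant.
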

\begin{proof}
To prove the well-posedness of~\eqref{eq:vPpnp} 
we apply the Lax-Milgram Lemma (see ~\cite{QV}, Chapter 5).
We have:
\begin{subequations}\label{eq:LMconditions}
\begin{itemize}
\item $F^{n_i}(\cdot)$ is a continuous linear functional
\begin{align}
| F^{n_i}(v) | &= \Frac{1}{\Delta t} \left| 
\int_{\Omega} {n}_i^k v \ d\Omega\right| \leq 
\Frac{1}{\Delta t} ||n^k_i||_0 ||v||_0 &
\forall v \in V^{n_i}; \label{eq:continuity_F}
\end{align}

\item $b^{n_i}(\cdot,\cdot)$ is a continuous bilinear form
\begin{equation}\label{eq:continuity_b}
\begin{split}
| b^{n_i}(\mathcal{N},v) | & 
\leq \Frac{1}{\Delta t} e^{\Psi^{max}} || \mathcal{N} ||_0 || v||_0 + D^{j,max}_i e^{\Psi^{max}} ||\nabla \mathcal{N} ||_0 ||\nabla v||_0 \\
& + M e^{\Psi^{max}} || \mathcal{N} ||_1 ||\nabla v||_0  \\
&\leq \left(\Frac{1}{\Delta t}e^{\Psi^{max}} + 
D^{j,max}_i e^{\Psi^{max}} + M e^{\Psi^{max}} \right) 
|| \mathcal{N} ||_1 ||v||_1 \ \\ &\forall \ \mathcal{N}, v \in V^{n_i};
\end{split}
\end{equation}
\item $b^{n_i}(\cdot,\cdot)$ is a coercive bilinear form.
Given a function $f \in L^1(\Omega)$ we denote by
$f^+$ its positive part. Moreover, we indicate by $H$ 
the Heaviside function such that $H(0)=0$. We have
\begin{equation}\label{eq:coercivity_b}
\begin{split}
b^{n_i}(N, N) & \geq \Frac{1}{\Delta t}
e^{\Psi^{min}} || N ||_0^2 + 
D^{j,min}_i e^{\Psi^{min}} ||\nabla N ||_0^2 \\
& - e^{\Psi^{max}} 
\int_{\Omega} \left(N \mathbf{u}^k \cdot 
\nabla N \right)^+ \ d\Omega
\end{split}
\end{equation}
where $||\cdot ||_0$ is the $L^2$-norm, 
$||\cdot||_1 = ( \ ||\cdot ||_0^2 + 
||\nabla (\cdot) ||_0^2 \ )^{{1}\over{2}}$ is the $H^1$-norm.
Set $f = \left(N \mathbf{u}^k \cdot \nabla N \right)^+$
and notice that $f \in L^1(\Omega)$ by the Sobolev and 
H\"older inequalities.
Using the fact that ${\rm div \ } \mathbf{u}^k = 0$, 
a direct calculation yields
$$
{\rm div}\left( \Frac{1}{2} N^2 \mathbf{u}^k H(f) \right) = f,
$$
from which, using the fact that 
$\mathbf{u}^k = \mathbf{0}$ on $\Gamma_N^{n_i}$, we obtain
$$
\int_{\Omega} \left(N \mathbf{u}^k \cdot 
\nabla N \right)^+ \ d\Omega = 
\Int{\Omega}{} f \ d\Omega = \Int{\Omega}{} {\rm div} 
\left( \Frac{1}{2} N^2 \mathbf{u}^k H(f) \right) d \Omega = 0.
$$
Therefore,~\eqref{eq:coercivity_b} yields
\begin{equation}\label{eq:coercivity_b_2}
b^{n_i}(N, N) \geq e^{\Psi^{min}} 
\min \left\{\Frac{1}{\Delta t}, \, D_i^{j,min} \right\}
||N ||_1^2 \qquad \forall N \in V^{n_i}.
\end{equation}
\end{itemize}
\end{subequations}
Having verified that properties~\eqref{eq:LMconditions} are satisfied,
the use of the Lax-Milgram Lemma allows us 
to conclude that the weak problem~\eqref{eq:wppnp} is uniquely solvable.
The stability estimate~\eqref{eq:stability_estimate_pnp} immediately follows from~\eqref{eq:continuity_F} and~\eqref{eq:coercivity_b_2}.
\end{proof}


\subsection{Stokes system}\label{Stokes_well}

Applying the time discretization to~\eqref{eq:stpr} 
and enforcing the boundary conditions~\eqref{eq:stBC}, 
we arrive to the following stationary generalized Stokes problem:
\begin{equation}\label{eq:stokes_tdiscr}
\begin{cases}
\rho_f \displaystyle{{(\mathbf{u}^{k+1} - \mathbf{u}^k)}\over{\Delta t}} 
- \displaystyle {{\rm div \ } ( 2 \mu_f \underline{\underline{\epsilon}}(\mathbf{u}^{k+1}) - p^{k+1} \underline{\underline{ \delta}}) }= \mathbf{f}^{k+1}& \mbox{in } \ \Omega \\[3mm]
\displaystyle {{ \rm div \ } \mathbf{u}^{k+1}} = 0& \mbox{in } \ \Omega \\
\displaystyle {\underline{\underline{\epsilon}}({\mathbf{u}^{k+1}})  
}= {{1}\over{2}} (\nabla \mathbf{u}^{k+1} + (\nabla \mathbf{u}^{k+1})^T)\\
\mathbf{u}^{k+1} = \mathbf{0} & \mbox{on } \ \Gamma_D^{\mathbf{u}} \\ 
\displaystyle {\underline{\underline{\sigma}}(\mathbf{u}^{k+1}, p^{k+1})\mathbf{n} } 
= \mathbf{h}^{k+1} & \mbox{on } \ \Gamma_N^{\mathbf{u}}
\end{cases}
\end{equation}
where $\mathbf{f}^{k+1} := q \sum_{i=1}^M z_i n_i^{k+1} 
\mathbf{E}^{k+1}$, $n_i^{k+1}$ and 
$\mathbf{E}^{k+1}$ being given functions 
computed by solving the T-VE-PNP system~\eqref{eq:pforte}
and the Poisson equation~\eqref{eq:PoissonPNP}, respectively.

We set $V^{\mathbf{u}}: = [H^1_{0,\Gamma_D^{\mathbf u}}(\Omega)]^3$,
and $Q^p: = L^2(\Omega)$. Then, we multiply  
the first and the second equations of~\eqref{eq:stokes_tdiscr}
by a test function $\mathbf{v} \in V^{\mathbf{u}}$ and $q \in Q^p$, respectively, and integrate over 
$\Omega$, to obtain the following abstract problem: \\
$\forall \ t^k > 0, k = 0, ... , N-1, $ \textit{find 
$\mathbf{u}^{k+1} \in V^{\mathbf{u}}$ and $p^{k+1} \in Q^p$ such that:}
\begin{subequations}\label{eq:saddlePoint}
\begin{align}
& a^{\mathbf{u}}(\mathbf{u}^{k+1}, \mathbf{v}) + 
b^{\mathbf{u},p}(\mathbf{v}, p^{k+1}) = F^{\mathbf{u}}(\mathbf{v}) &
\qquad \forall \mathbf{v} \in V^{\mathbf u} \label{eq:momentum_st_weak} \\
& b^{\mathbf{u},p}(\mathbf{u}^{k+1}, q) = 0 & \qquad \forall q \in Q^p
\label{eq:mass_st_weak}
\end{align}
\end{subequations}
where:
$$
\begin{aligned}
&a^{\mathbf{u}}(\mathbf{u}^{k+1}, \mathbf{v}) 
=\int_{\Omega}{{\mathbf{u}^{k+1} \cdot  \mathbf{v}}\over{\Delta t}} \ d\Omega +\int_{\Omega} 2\nu \underline{\underline{\epsilon}}(\mathbf{u}^{k+1}) : \underline{\underline{\epsilon}}(\mathbf{v}) \ d\Omega \\
&b^{\mathbf{u},p}(\mathbf{v}, p^{k+1}) = 
- \int_{\Omega} p^{k+1} \, {\rm div \ } \mathbf{v} \ d\Omega \\
&F^{\mathbf{u}}(\mathbf{v}) =  \int_{\Omega} {{\mathbf{f}^{k+1} \cdot  \mathbf{v}}\over{\rho_f}} \ d\Omega + \int_{\Omega} {{\mathbf{u}^k \cdot  \mathbf{v}}\over{\Delta t}} \ d\Omega + \int_{\Gamma_N} {{\mathbf{h} \cdot \mathbf{v}}\over{\rho_f}} \ d\Gamma. 
\end{aligned}
$$
\begin{proposition}\label{prop:stokes}
The weak problem~\eqref{eq:saddlePoint} is well posed and its solution 
satisfies the following stability estimate
\begin{equation}\label{eq:stability_estimate_stokes}
|| \mathbf{u}^{k+1} ||_{V^{\mathbf u}} \leq C_{\mathbf u}, \qquad 
|| p^{k+1} ||_{Q^p} \leq C_p
\end{equation}
where $|| \mathbf{v} ||_{V^{\mathbf u}} = 
|| \underline{\underline{\epsilon}}
({\mathbf{v}}) ||_0$ and $|| q ||_{Q^p} = ||q ||_0$ for all
$\mathbf{v} \in V^{\mathbf{u}}$ and $q \in Q^p$, 
while $C_{\mathbf u}$ and $C_p$ are positive constants depending on problem data, on the coercivity constant of the
bilinear form $a^{\mathbf{u}}(\cdot, \cdot)$ on ${\rm ker}\,V^{\mathbf u}$ 
and on the inf-sup constant of the bilinear form $b^{\mathbf{u},p}
(\cdot, \cdot)$.
\end{proposition}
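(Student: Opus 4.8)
The plan is to recognize problem~\eqref{eq:saddlePoint} as an abstract saddle point (mixed) problem and to establish its well-posedness through the Babu\v{s}ka--Brezzi theory, as developed for the Stokes system in~\cite{QV}, Chapter 9. Accordingly, it suffices to verify four structural hypotheses on the bilinear forms $a^{\mathbf{u}}(\cdot,\cdot)$ and $b^{\mathbf{u},p}(\cdot,\cdot)$ and on the load functional $F^{\mathbf{u}}(\cdot)$: continuity of all three; coercivity of $a^{\mathbf{u}}$ on the kernel $\ker V^{\mathbf u} = \{ \mathbf{v}\in V^{\mathbf u} : b^{\mathbf{u},p}(\mathbf{v},q)=0 \ \forall q\in Q^p\}$ of divergence-free velocities; and the inf-sup (LBB) condition for $b^{\mathbf{u},p}$. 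Once these are in place, the theorem guarantees a unique pair $(\mathbf{u}^{k+1},p^{k+1})$ together with the a priori bounds~\eqref{eq:stability_estimate_stokes}, with $C_{\mathbf u}$ and $C_p$ expressed in terms of the continuity norm of $F^{\mathbf u}$, the coercivity constant of $a^{\mathbf u}$ on $\ker V^{\mathbf u}$ and the inf-sup constant of $b^{\mathbf{u},p}$, exactly as anticipated in the statement.

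First I would dispatch the continuity and coercivity of $a^{\mathbf{u}}$. Continuity follows from the Cauchy--Schwarz inequality applied to each of the two contributions, bounding the mass term via the Poincar\'e inequality and the viscous term directly by the chosen norm $\|\mathbf{v}\|_{V^{\mathbf u}}=\|\underline{\underline{\epsilon}}(\mathbf{v})\|_0$. Coercivity is the delicate point of this step: since $a^{\mathbf{u}}(\mathbf{v},\mathbf{v}) \geq 2\nu\,\|\underline{\underline{\epsilon}}(\mathbf{v})\|_0^2$, the estimate $a^{\mathbf u}(\mathbf v,\mathbf v)\ge 2\nu\,\|\mathbf v\|_{V^{\mathbf u}}^2$ is algebraically immediate, but its meaning rests on $\|\underline{\underline{\epsilon}}(\cdot)\|_0$ being a genuine norm equivalent to the full $H^1$ norm on $V^{\mathbf u}$. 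This is precisely Korn's second inequality, which holds because the Dirichlet portion $\Gamma_D^{\mathbf u}$ has positive surface measure, so that no nontrivial rigid body motion survives in $V^{\mathbf u}$; coercivity then holds on all of $V^{\mathbf u}$, a fortiori on $\ker V^{\mathbf u}$, with constant $2\nu$. For $b^{\mathbf{u},p}$, continuity is immediate from $\mathrm{div}\,\mathbf{v}=\mathrm{tr}\,\underline{\underline{\epsilon}}(\mathbf{v})$, giving $|b^{\mathbf{u},p}(\mathbf{v},q)|\le \sqrt{3}\,\|q\|_0\,\|\mathbf{v}\|_{V^{\mathbf u}}$, while continuity of $F^{\mathbf u}$ follows by assuming $\mathbf{f}^{k+1}\in[L^2(\Omega)]^3$ and $\mathbf{u}^k\in[L^2(\Omega)]^3$ and by controlling the surface term through the trace theorem on $\Gamma_N^{\mathbf u}$.

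The hard part will be the inf-sup condition, namely the existence of $\beta>0$ with $\sup_{\mathbf{v}\in V^{\mathbf u}} b^{\mathbf{u},p}(\mathbf{v},q)/\|\mathbf{v}\|_{V^{\mathbf u}} \geq \beta\,\|q\|_0$ for all $q\in Q^p$. Rather than re-derive it, I would invoke the classical LBB result for the Stokes pair $[H^1_{0,\Gamma_D^{\mathbf u}}(\Omega)]^3\times L^2(\Omega)$ from~\cite{QV}, whose analytic core is the surjectivity of the divergence operator onto $L^2$: given $q$, one constructs $\mathbf{v}$ solving $\mathrm{div}\,\mathbf{v}=-q$ with $\|\mathbf{v}\|_{V^{\mathbf u}}\le C\|q\|_0$. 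A point worth emphasizing is that here the pressure lives in the full $L^2(\Omega)$ rather than in its zero-mean subspace: this is legitimate because the Neumann part $\Gamma_N^{\mathbf u}$ is nonempty, so the normal-stress condition~\eqref{eq:secondST} pins down the pressure level and the constant pressure mode is detected by test functions not vanishing on $\Gamma_N^{\mathbf u}$. With coercivity, continuity and inf-sup verified, the Brezzi theorem closes the argument and yields both the well-posedness and the stability bounds~\eqref{eq:stability_estimate_stokes}.
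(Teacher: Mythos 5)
Your proposal is correct and takes essentially the same route as the paper: the paper's own proof consists of exactly the appeal you make, namely invoking the abstract saddle-point (Babu\v{s}ka--Brezzi) theory of~\cite{QV}, Section 7.4, with the stability bounds~\eqref{eq:stability_estimate_stokes} following from Theorem 7.4.1 there. The only difference is that you spell out the verification of the hypotheses (continuity, coercivity via Korn's second inequality, and the inf-sup condition on the full pressure space $L^2(\Omega)$, legitimate since $\Gamma_N^{\mathbf u}\neq\emptyset$), which the paper leaves entirely to the citation.
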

\begin{proof}
Well-posedness of~\eqref{eq:saddlePoint} can be proved
using the abstract theory for 
saddle-point problems developed in~\cite{QV}, Section 7.4.
The stability estimate~\eqref{eq:stability_estimate_stokes} is a consequence
of the application of Theorem 7.4.1 of~\cite{QV}.
\end{proof}

\subsection{The mechanical problem} \label{Mech_well}
%
%
%
%
%
To proceed with the weak formulation of the Navier-Lam\'e model in presence of a thermal field
we set 
\begin{align}
& V^{\mathbf{d}}: = [H^1_{0,\Gamma_D^{\mathbf d}}(\Omega)]^3, 
& \label{eq:space_V_mech}
\end{align}
multiply~\eqref{eq:MECeq} by a test function 
$\mathbf{v} \in V^{\mathbf{d}}$ and integrate over 
$\Omega$. Applying the boundary conditions~\eqref{eq:bcs_mech}, 
the weak formulation of the mechanical problem is:\\
\textit{find $\mathbf{d}^{k+1} \in V^{\mathbf{d}}$ such that:}\\
\begin{equation}\label{eq:weak_mech}
a^{{\mathbf{d}}}(\mathbf{d}^{k+1}, \mathbf{v}) = 
F^{{\mathbf{d}}}(\mathbf{v})  \qquad \forall \mathbf{v} \in V^{{\mathbf{d}}}
\end{equation}
where:
$$
\begin{aligned}
&a^{{\mathbf{d}}}(\mathbf{d}^{k+1}, \mathbf{v}) = 
\int_{\Omega}  \underline{\underline{\textit{C}}} 
\underline{\underline{\varepsilon}}_{mec} (\vect{d}^{k+1}) : 
\underline{\underline{\epsilon}}(\mathbf{v}) \ d\Omega \\
&F^{{\mathbf{d}}}(\mathbf{v}) =   \int_{\Omega}  \underline{\underline{\textit{C}}} 
\underline{\underline{\varepsilon}}_{mec}^{th} (T^{k}) : 
\underline{\underline{\epsilon}}(\mathbf{v}) \ d\Omega + \int_{\Omega} 
{ {\rm div \ } \underline{\underline{\sigma_0}} \cdot  \mathbf{v}} \ d\Omega + \int_{\Omega} {\mathbf{f_{mec}} \cdot  \mathbf{v}} \ d\Omega.
\end{aligned}
$$

\begin{proposition}\label{prop:mech}
The weak problem~\eqref{eq:weak_mech} is well posed and its solution 
satisfies the following stability estimate
\begin{equation}\label{eq:stability_estimate_mech}
|| \mathbf{d}^{k+1} ||_{V^{\mathbf d}} \leq C_{\mathbf d}
\end{equation}
where $|| \mathbf{v} ||_{V^{\mathbf d}} = || 
\underline{\underline{\epsilon}}
({\mathbf{v}}) ||_0$ and $C_{\mathbf d}$ is a positive constant depending on problem data 
and on the coercivity constant of the
bilinear form $a^{{\mathbf{d}}}(\cdot, \cdot)$.
\end{proposition}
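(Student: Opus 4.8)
The plan is to establish the well-posedness of the coercive variational problem~\eqref{eq:weak_mech} through a direct application of the Lax-Milgram Lemma (see~\cite{QV}, Chapter 5), in complete analogy with the argument already carried out for the T-VE-PNP system in the proof of Theorem~\ref{theo:pnp}. Concretely, I would verify three facts: that the space $V^{\mathbf{d}}$ defined in~\eqref{eq:space_V_mech}, equipped with the energy norm $\|\mathbf{v}\|_{V^{\mathbf{d}}} = \|\underline{\underline{\epsilon}}(\mathbf{v})\|_0$, is a Hilbert space; that the bilinear form $a^{\mathbf{d}}(\cdot,\cdot)$ is continuous and coercive on $V^{\mathbf{d}} \times V^{\mathbf{d}}$; and that $F^{\mathbf{d}}(\cdot)$ is a continuous linear functional on $V^{\mathbf{d}}$.

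The first preparatory step is to record the structural properties of the elastic tensor $\underline{\underline{C}}$ that follow from the standing assumptions $E > 0$ and $0 < \nu < 1/2$. These guarantee that the Lam\'e coefficients $\lambda$ and $\mu$ are strictly positive, so that through Hooke's law~\eqref{eq:Hooke} the tensor $\underline{\underline{C}}$ is bounded and uniformly positive definite on symmetric second-order tensors: there exist constants $0 < C_{min} \leq C_{max}$ such that $C_{min}\,|\tau|^2 \leq \underline{\underline{C}}\,\tau : \tau \leq C_{max}\,|\tau|^2$ for every symmetric $\tau$. Continuity of $a^{\mathbf{d}}$ then follows from the upper bound and the Cauchy-Schwarz inequality, giving $|a^{\mathbf{d}}(\mathbf{d},\mathbf{v})| \leq C_{max}\,\|\mathbf{d}\|_{V^{\mathbf{d}}}\,\|\mathbf{v}\|_{V^{\mathbf{d}}}$. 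Coercivity in the chosen norm is then immediate from the lower bound, since $\underline{\underline{\varepsilon}}_{mec}(\mathbf{v}) = \nabla_s(\mathbf{v}) = \underline{\underline{\epsilon}}(\mathbf{v})$ by~\eqref{eq:MECdef}, whence $a^{\mathbf{d}}(\mathbf{v},\mathbf{v}) \geq C_{min}\,\|\underline{\underline{\epsilon}}(\mathbf{v})\|_0^2 = C_{min}\,\|\mathbf{v}\|_{V^{\mathbf{d}}}^2$.

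The step requiring genuine care, and the main obstacle, is justifying that $\|\underline{\underline{\epsilon}}(\cdot)\|_0$ is in fact a norm equivalent to the full $H^1$ norm on $V^{\mathbf{d}}$. This is needed both to make $V^{\mathbf{d}}$ complete in the energy norm and to control the load functional $F^{\mathbf{d}}$. The tool is Korn's inequality: because the Dirichlet portion $\Gamma_D^{\mathbf{d}}$ of the boundary has positive surface measure, the only rigid body motion belonging to $V^{\mathbf{d}}$ is the trivial one, so there exists $C_K > 0$ with $\|\nabla \mathbf{v}\|_0 \leq C_K\,\|\underline{\underline{\epsilon}}(\mathbf{v})\|_0$ for all $\mathbf{v} \in V^{\mathbf{d}}$. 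Combined with the Poincar\'e inequality, again available thanks to the homogeneous Dirichlet condition on $\Gamma_D^{\mathbf{d}}$, this controls $\|\mathbf{v}\|_0$ by $\|\mathbf{v}\|_{V^{\mathbf{d}}}$.

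Finally, continuity of $F^{\mathbf{d}}$ would follow by bounding its three contributions separately: the thermal term by $C_{max}\,\|\underline{\underline{\varepsilon}}_{mec}^{th}(T^{k})\|_0\,\|\mathbf{v}\|_{V^{\mathbf{d}}}$, with $T^{k}$ a known datum supplied by the heat subproblem, and the two volumetric terms $\int_\Omega {\rm div}\,\underline{\underline{\sigma_0}} \cdot \mathbf{v}\,d\Omega$ and $\int_\Omega \mathbf{f_{mec}} \cdot \mathbf{v}\,d\Omega$ by Cauchy-Schwarz followed by the Poincar\'e-Korn chain above, yielding $|F^{\mathbf{d}}(\mathbf{v})| \leq C_F\,\|\mathbf{v}\|_{V^{\mathbf{d}}}$ for a constant $C_F$ depending only on the data. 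The Lax-Milgram Lemma then delivers unique solvability of~\eqref{eq:weak_mech}, and the stability estimate~\eqref{eq:stability_estimate_mech} follows at once with $C_{\mathbf{d}} = C_F / C_{min}$, where $C_{min}$ is the coercivity constant of $a^{\mathbf{d}}(\cdot,\cdot)$.
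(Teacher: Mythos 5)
Your proof is correct and follows exactly the route the paper takes: its one-line proof cites precisely the Lax--Milgram Lemma and Korn's inequality, which are the two pillars of your argument. You have simply filled in the details the paper leaves implicit (positivity of the Lam\'e coefficients for continuity/coercivity of $a^{\mathbf{d}}$, the Korn--Poincar\'e chain for norm equivalence on $V^{\mathbf{d}}$, and the bound on $F^{\mathbf{d}}$ yielding $C_{\mathbf{d}} = C_F/C_{min}$), all of which are sound.
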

\begin{proof}
The proof follows from the application of Lax-Milgram Lemma 
and Korn's inequality (see, e.g.,~\cite{braessbook}, Thm. 3.4).
\end{proof}
\
\section{Galerkin Finite Element Approximation}
\label{sec:feapproximation}

In this section we discuss the Galerkin Finite Element approximation
of each weak problem introduced and analyzed in sec.~\ref{sec:weak}.
In sec.~\ref{TVEPNP_discr} we study the T-VE-PNP system,
the Stokes system in sec.~\ref{Stokes_discr}, 
and the Navier Lam\'e formulation for the mechanical displacement
in sec.~\ref{Mech_discr}. Throughout this section, for $r \geq 1$, 
we denote by $\mathbb{P}_r(K)$ the space of algebraic polynomials 
of degree $\leq r$ defined on the element $K \in \mathcal{T}_h$.

\subsection{T-VE-PNP system}\label{TVEPNP_discr}

Let us define the following finite-dimensional subspace 
of~\eqref{eq:V_u}
$$
V_h^{n_i} = 
\{ v_h \in C^0({\overline{\Omega}}) : v_h|_K \in \mathbb{P}_1(K) 
\, \, \forall K \in \mathcal{T}_h,\  v_h|_{\Gamma_D^{n_i}} = 0 \}
$$
such that $N_h = {\rm dim}(V^{n_i}_h)$. The Galerkin Finite Element approximation
of~\eqref{eq:wppnp} in matrix form is the linear algebraic system
%
%
%
\begin{equation}\label{eq:alsys}
K^{n_i} \widetilde{\mathbf{n}}^{k+1} = \mathbf{F}^{n_i}
\end{equation}
where $K^{n_i} = M^{n_i} + B^{n_i}$ is the generalized stiffness matrix,
$M^{n_i}$ is the mass matrix, $B^{n_i}$ is the stiffness matrix 
and $\mathbf{F}^{n_i}$ is the load vector, given by:
\begin{subequations}\label{eq:matrices_n}
\begin{align}
& M^{n_i} = \Frac{1}{\Delta t} (e^{\Psi} \Phi_j, \Phi_i) & \label{eq:mass_matrix} \\[2mm]
& B^{n_i} = b_h^{n_i}(\Phi_j, \Phi_i) & \label{eq:stiffness_matrix} \\[2mm]
& \mathbf{F}^{n_i} = F^{n_i}(\Phi_i). & \label{eq:load_vector}
\end{align}
\end{subequations}
In~\eqref{eq:matrices_n}, $\{ \Phi_i \}_{i=1}^{N_h}$ is the Lagrangian set of basis functions for $V_h^{n_i}$ whereas $b_h^{n_i}(\cdot, \cdot)$ 
is a suitable modification of the bilinear form~\eqref{eq:bilinear_form_b}.
The solution vector $\widetilde{\mathbf{n}}^{k+1}$ of the linear system~(\ref{eq:alsys}) contains the degrees of freedom (dofs)
of the approximation $\widetilde{n}_{i,h}^{k+1}$ of the 
unknown ion concentration $\widetilde{n}_i$ at time $t^{k+1}$.

Three computational issues are worth noting in the 
stable and accurate solution of system~\eqref{eq:alsys}.
The first issue concerns the mass matrix $M^{n_i}$. We compute its 
entries using the 3D trapezoidal rule which corresponds to 
"lumping" diagonalization of $M^{n_i}$. The second issue concerns the 
numerical quadrature rule used to compute the entries 
$b_h(\Phi_j, \Phi_i)$ of the stiffness matrix $B^{n_i}$. 
As a matter of fact, if the convective term of the 
flow $\mathbf{f_{n_i}}$ dominates the diffusive contribution, 
the standard finite element method with piecewise 
linear polynomials may lead to instability, with spurious oscillations in 
the numerical results. An established remedy to this drawback 
is the EAFE (Edge Averaged Finite Element)
approach studied in 2D in~\cite{gatti1998new} and in 3D in~\cite{Zikatanov:EAFE}. 
This latter is a multidimensional generalization of the 
celebrated 1D Scharfetter-Gummel method~\cite{SG}, and 
is obtained by replacing along each edge of the element 
$K$ the diffusion coefficient $D_i^{k+1} e^{\Psi}$ 
in~\eqref{eq:bilinear_form_b} with its harmonic average. 
The third issue concerns the choice of the dependent variable
in the computer implementation of the linear system~\eqref{eq:alsys}.
As a matter of fact, the change of variable~\eqref{eq:changeVarpnp}
gives rise to a symmetric positive definite stiffness matrix 
$B^{n_i}$. However, the entries of this matrix may be impossible
to compute because of overflow exceptions due to the dynamic range
required by the evaluation of the quantity $e^{\Psi}$. Thus, it is
necessary to go back to the original dependent variable $n$ by 
applying the inverse of the transformation~\eqref{eq:changeVarpnp}
at each node of $\mathcal{T}_h$. This is equivalent to 
multiplying each column $I$ of $B^{n_i}$ by $e^{-\Psi_I}$, for 
$I=1, \ldots, N_h$ (cf.~\cite{Brezzisiam1989,Brezzicmame1989}).
The novel linear algebraic system for the variable $n$ can be written 
in matrix form as
\begin{equation}\label{eq:alsys_n}
K^{n} \mathbf{n}^{k+1} = \mathbf{F}^{n_i}
\end{equation}
where 
$$
K^{n} = K^{n_i} {\rm diag}(e^{-\Psi}).
$$
Solving~\eqref{eq:alsys_n} yields
an accurate and robust treatment of
the mass balance equations for $n_i$ especially in the presence of
high convective terms ($\mathbf{E}$ and/or $\mathbf{u}$), preventing
the occurrence of spurious unphysical oscillations in the computed
ion concentrations under suitable conditions on the regularity
of $\mathcal{T}_h$. 
\begin{proposition}[Discrete maximum principle]\label{prop:DMP}
Assume $\mathbf{u} = \mathbf{0}$ and that
the triangulation $\mathcal{T}_h$ satisfies 
Lemma 2.1 of~\cite{Zikatanov:EAFE}. Then, 
$K^{n}$ is a diagonally dominant by columns M-matrix. 
This implies that system~\eqref{eq:alsys_n} 
is uniquely solvable~\cite{varga1999matrix} 
and that if $\mathbf{F}^{k+1} \geq 0$ then 
$\mathbf{n}^{k+1} > 0$ (Discrete Maximum Principle, DMP).
\end{proposition}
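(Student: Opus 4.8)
The plan is to read off the sign structure of $K^{n}$ directly from the edge-averaged assembly and the Slotboom-type change of variable~\eqref{eq:changeVarpnp}, and then to close the argument with the classical theory of M-matrices. First I would set $\mathbf{u} = \mathbf{0}$, which annihilates the convective contribution $-\int_{\Omega} \mathcal{N} e^{\Psi} \mathbf{u}^k \cdot \nabla v \, d\Omega$ in the bilinear form~\eqref{eq:bilinear_form_b}, so that $b^{n_i}(\cdot,\cdot)$ collapses to a symmetric reaction--diffusion form whose two pieces assemble into the lumped mass matrix $M^{n_i}$ and the EAFE stiffness matrix $B^{n_i}$. Since the reaction coefficient $e^{\Psi}/\Delta t$ is pointwise positive and $M^{n_i}$ is diagonalized by the trapezoidal (mass-lumping) rule, $M^{n_i}$ is diagonal with strictly positive entries; likewise each diagonal entry of $B^{n_i}$ is positive, so $K^{n_i}=M^{n_i}+B^{n_i}$ has strictly positive diagonal.

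The decisive ingredient is the off-diagonal sign of $B^{n_i}$. Here I would invoke Lemma~2.1 of~\cite{Zikatanov:EAFE}: when $\mathcal{T}_h$ meets the quoted mesh condition, replacing $D_i^{j} e^{\Psi}$ along each edge by its harmonic average produces $B^{n_i}_{IJ} \leq 0$ for all $I \neq J$. Thus $K^{n_i}$ is a Z-matrix with positive diagonal, and the column scaling $K^{n} = K^{n_i}\,\mathrm{diag}(e^{-\Psi})$ merely multiplies each column $J$ by the positive factor $e^{-\Psi_J}$, leaving this sign pattern intact: $K^{n}$ is again a Z-matrix.

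To upgrade $K^{n}$ to a nonsingular M-matrix I would establish strict diagonal dominance by columns. Because the pure-diffusion bilinear form annihilates constants, the column sum of the diffusion part over the active rows equals minus the sum of the eliminated stiffness couplings of $J$ to the Dirichlet nodes, a nonnegative quantity since those off-diagonal entries are $\leq 0$, and strictly positive precisely when $J$ is adjacent to $\Gamma_D^{n_i}$. Adding the strictly positive lumped reaction entry $M^{n_i}_{JJ}$ makes every column sum of $K^{n_i}$ strictly positive, and the scaling $e^{-\Psi_J}>0$ keeps it so; for the Z-matrix $K^{n}$ this is exactly strict column dominance $|K^{n}_{JJ}| > \sum_{I \neq J} |K^{n}_{IJ}|$. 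A strictly column-diagonally-dominant Z-matrix is a nonsingular M-matrix~\cite{varga1999matrix}, which gives unique solvability of~\eqref{eq:alsys_n} together with $(K^{n})^{-1} \geq 0$; hence a nonnegative load yields $\mathbf{n}^{k+1} = (K^{n})^{-1}\mathbf{F}^{k+1} \geq 0$. Irreducibility of $K^{n}$, inherited from the connectivity of the mesh graph, then promotes $(K^{n})^{-1}$ to be entrywise positive, so a nontrivial nonnegative load forces the strict bound $\mathbf{n}^{k+1} > 0$.

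The step I expect to be the main obstacle is the diagonal-dominance bookkeeping near $\Gamma_D^{n_i}$: one must verify that the contributions of the eliminated Dirichlet couplings enter the column sums with the correct non-positive sign, and that, combined with the time-derivative (reaction) term, they deliver strict rather than merely weak dominance. This strictness, together with irreducibility, is exactly what turns the monotonicity $\mathbf{n}^{k+1}\geq 0$ into the genuine discrete maximum principle $\mathbf{n}^{k+1}>0$.
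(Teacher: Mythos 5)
Your proposal is correct and takes essentially the same approach as the paper: in fact, the paper states Proposition~\ref{prop:DMP} with no written proof at all, relying on precisely the two ingredients you invoke---Lemma~2.1 of~\cite{Zikatanov:EAFE} for the sign structure of the EAFE-assembled stiffness matrix and the M-matrix theory of~\cite{varga1999matrix} for unique solvability and inverse-positivity. Your write-up supplies the details the paper leaves implicit (strict positivity of the lumped mass contribution, preservation of the Z-pattern and of column dominance under the scaling $K^{n} = K^{n_i}\,\mathrm{diag}(e^{-\Psi})$, the dominance bookkeeping at nodes adjacent to $\Gamma_D^{n_i}$, and irreducibility of the mesh graph), and is if anything more careful than the statement itself, since the strict bound $\mathbf{n}^{k+1}>0$ genuinely requires the nonnegative load $\mathbf{F}^{k+1}$ to be nontrivial, as you note.
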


The previous result shows the beneficial properties of the 
extension of the EAFE method to the numerical approximation of the 
PNP-T model. The analogue of Prop.~\ref{prop:DMP} in the case 
of the VE-PNP-T model (where the electrolyte fluid velocity 
is non vanishing) is at moment an open 
issue that will be the object of a next step of the current research.
We limit ourselves to the observation that in all the numerical experiments
reported in sec.~\ref{sec:simulations} 
no spurious oscillations ever appeared in the
computed ion concentrations. This is to be ascribed to
the fact that $\mathbf{u}$ is a small perturbation to 
thermo-electrochemical flow so that, in practice, the method
continues to satisfy a DMP.

\subsection{Stokes system} \label{Stokes_discr}

In sec.~\ref{sec:stokes_fem} we describe the two finite 
element schemes that are used in MP-FEMOS for the 
numerical discretization of the saddle-point 
problem~\eqref{eq:saddlePoint} whereas in sec.~\ref{sec:stokes_alg}
we illustrate and analyze 
the algorithms that are implemented for the efficient
solution of the linear algebraic system.

\subsubsection{Finite element schemes}\label{sec:stokes_fem}
The Galerkin finite element approximation of 
the generalized Stokes problem ~\eqref{eq:saddlePoint} 
is a delicate issue because if the finite element spaces for 
the electrolyte fluid velocity and pressure do not satisfy a
compatibility condition (the so-called inf-sup or LBB 
condition~\cite{Brezzi1974}) then uniqueness of the discrete solution 
fails to hold and spurious pressure modes may arise 
affecting the stability of the formulation. To overcome this difficulty
two general approaches can be adopted. The first approach consists in 
selecting a finite element pair satisfying the inf-sup condition 1) in 
a manner that is independent of the discretization parameter $h$; and
2) with an optimal convergence order in the graph norm with respect
to the $V \times Q$ topology. 
The second approach consists in modifying the saddle point 
problem~\eqref{eq:saddlePoint} by the introduction of a stabilization
term whose weight is strong enough to circumvent the inf-sup condition. 

The first approach is pursued in MP-FEMOS by implementing the so-called
Taylor-Hood pair (see~\cite{QV} Chapter 9 for a detailed description 
and analysis). This amounts to setting:
\begin{subequations}\label{spaces_stokes}
\begin{align}
& V_{h}^{TH} = \{ \mathbf{v}_h \in [
C^0(\overline{\Omega})]^3: 
\mathbf{v}_h|_K \in [\mathbb{P}_2 (K)]^3 \ \forall K \in \mathcal{T}_h, \mathbf{v}_h|_{\Gamma_D^{\mathbf{u}}} = 0 \} & \label{space_velocity} \\
& Q_h^{TH} = \{ q_h \in C^0(\overline{\Omega}): 
q_h|_K \in \mathbb{P}_1 (K) \ \forall K \in \mathcal{T}_h \} 
& \label{space_pressure}
\end{align}
and to solving the following sequence of saddle-point problems: \\
$\forall \ t^k > 0, k = 0, ... , N-1$, \textit{find $\mathbf{u}^{k+1}_h 
\in V_{h}^{TH}$ and $p_h^{k+1} \in Q_h^{TH}$ such that:}
\begin{align}
&a^{\mathbf{u}}(\mathbf{u}^{k+1}_h , \mathbf{v}_h ) + 
b^{\mathbf{u},p}(\mathbf{v}_h , p_h ) = 
F^{\mathbf{u}}(\mathbf{v}_h) & \qquad \forall \mathbf{v}_h \in V_{h}^{TH} 
\label{eq:TH_momentum} \\
& b^{\mathbf{u},p}( \mathbf{u}^{k+1}_h , q_h ) = 0 & \qquad 
\forall q_h \in Q_h^{TH}. \label{eq:TH_mass}
\end{align}
\end{subequations}
The second approch is pursued in MP-FEMOS by implementing the so-called
Hughes-Franca-Balestra stabilization (see~\cite{HughesFrancaBalestra1986}).
This amounts to setting:
\begin{subequations}\label{spaces_stokes_HFB}
\begin{align}
& V_{h}^{HFB} = \{ \mathbf{v}_h \in [
C^0(\overline{\Omega})]^3: 
\mathbf{v}_h|_K \in [\mathbb{P}_1 (K)]^3 \ \forall K \in \mathcal{T}_h, \mathbf{v}_h|_{\Gamma_D^{\mathbf{u}}} = 0 \} & \label{space_velocity_HFB} \\
& Q_h^{HFB} = \{ q_h \in C^0(\overline{\Omega}): 
q_h|_K \in \mathbb{P}_1 (K) \ \forall K \in \mathcal{T}_h \} 
& \label{space_pressure_HFB}
\end{align}
and to solving the following sequence of 
stabilized saddle-point problems: \\
$\forall \ t^k > 0, k = 0, ... , N-1$, 
\textit{find \ $\mathbf{u}^{k+1}_h \in V_{h}^{HFB}$ 
and $p_h^{k+1}\in Q_h^{HFB}$ such that:}
\begin{align}
& a^{\mathbf{u}}(\mathbf{u}^{k+1}_h,\mathbf{v}_h) + 
b^{\mathbf{u},p}(\mathbf{v}_h, p_h) = 
F^{\mathbf{u}}(\mathbf{v}_h) & \quad \forall 
\mathbf{v}_h  \in V_{h}^{HFB} \label{eq:momentum_HFB} \\
& b^{\mathbf{u},p}(\mathbf{u}^{k+1}_h,q_h) = \Phi_h^{HFB}(q_h) & \qquad 
\forall q_h \in Q_h^{HFB} \label{eq:mass_HFB}
\end{align}
where:
\begin{align*}
& \Phi_h^{HFB}(q_h) :=  
\sum_K {{ h_K^2}\over{\delta}} \int_{K} 
\left( 
{{1}\over{\Delta t}}\mathbf{u}^{k+1}_h - 
\nu \Delta \mathbf{u}^{k+1}_h + \nabla p_h - 
\mathbf{\tilde{f}}
\right) \cdot \nabla q_h & \\
& \displaystyle {\mathbf{\tilde{f}} = {{\mathbf{f}}\over{\rho_f}} + {{\mathbf{u}^{k}_h}\over{\Delta t}}}. & 
\end{align*}
The quantity $\delta$ is 
a positive parameter to be properly selected upon noting that 
a too large value of $\delta$ does not eliminate the spurious modes of the pressure while a too small value of $\delta$ yields a poor approximation 
for the pressure field near the domain boundary 
(cf.~\cite{QV}, Chapter 9).
\end{subequations}

\subsubsection{Analysis and linear solvers for the generalized Stokes
system}\label{sec:stokes_alg}

In this section we assume that 1) the 
time advancing scheme has second-order accuracy (as in the case of the 
Crank-Nicolson scheme or the TR-BDF2 scheme); and 2) the exact solution
is appropriately smooth. The quantity $C$ denotes a positive constant
independent of $h$ whose value is not the same at each occurrence.

The Taylor-Hood FE solution 
$\mathbf{u}_h^{TH} \in V^{TH}_h \times p_h^{TH} \in Q_h^{TH}$ satisfies
the following optimal error estimate (see~\cite{QV}, Chapter 9)
\begin{align}
& || \mathbf{u}^{k+1} - \mathbf{u}_h^{TH} ||_V + 
|| p^{k+1} - p_h^{TH} ||_Q \leq C h^2. & \label{eq:error_TH}
\end{align}
In spite of the fact that~\eqref{eq:error_TH} shows that the TH method is 
second-order accurate, the linear system associated with 
the saddle-point FE problem~\eqref{eq:TH_momentum}-~\eqref{eq:TH_mass}
has a very large size in 3D computations, so that it is a good practice 
to use an iterative method for its solution. 
In MP-FEMOS, the adopted approach is the Uzawa method (see~\cite{QV}
Chapter 9) with preconditioner $P$ given by 
the mass matrix scaled by the fluid viscosity $\nu$ and acceleration
parameter $\rho$ such that $\rho > 1/(2 \nu)$. 
In the case where convergence of Uzawa's iteration is still to slow, an alternative is given by the 
MUltifrontal Massively Parallel sparse direct Solver (MUMPS) 
developed at INRIA~\cite{MUMPS}.


The HFB FE solution 
$\mathbf{u}_h^{HFB} \in V^{HFB}_h \times p_h^{HFB} \in Q_h^{HFB}$ 
satisfies the following error estimate (see~\cite{QV}, Chapter 9)
\begin{align}
& || \mathbf{u}^{k+1} - \mathbf{u}_h^{HFB} ||_V + 
|| p^{k+1} - p_h^{HFB} ||_Q \leq C h. & \label{eq:error_HFB}
\end{align}
The above result is suboptimal for the pressure variable. However, the
HFB stabilization, compared to the TH approach, 
has the advantage of allowing the use of equal-order 
interpolation for \emph{both} the unknowns of the Stokes system.
This reflects into a simpler coding, a much reduced
memory size and, more importantly, into a substantially lower 
computational effort. The disadvantage lies principally into
the need of a proper selection of the stabilization parameter
$\delta$.

A thorough comparison between the performance of the 
two solution methods considered in the MP-FEMOS software is addressed
in sec.~\ref{sec:simulations}.

\subsection{The mechanical problem} \label{Mech_discr}

Let us define the following finite-dimensional subspace 
of~\eqref{eq:space_V_mech}
$$
V_{h}^{\mathbf{d}} = 
\{ \mathbf{v}_h \in [C^0(\overline{\Omega})]^3: 
\mathbf{v}_h|_K \in [\mathds{P}_1(K)]^3 
\ \forall K \in \mathcal{T}_h, \mathbf{v}_h|_{\Gamma_D^{\mathbf{d}}} 
= \mathbf{0} \}.
$$
%
The Galerkin approximation of~\eqref{eq:weak_mech} is:\\
\textit{Find $ \mathbf{d}_h^{k+1} 
\in V_{h}^{\mathbf{d}}$ such that}:
\begin{equation} \label{eq:dispGProb} 
a^{\mathbf{d}}(\mathbf{d}_h^{k+1},\mathbf{v}_h) = F^{\mathbf{d}}(\mathbf{v}_h) \quad 
\forall \mathbf{v}_h \in V_{h}^{\mathbf{d}}.
\end{equation}
Problem~\eqref{eq:dispGProb} is equivalent to the solution of the
linear algebraic system
%
\begin{equation}\label{eq:mec_bloc}
A^{\mathbf{d}} \mathbf{d}^{k+1} = \mathbf{L}^{\mathbf d}
\end{equation}
where:
\begin{align*}
& A_{ij}^{\mathbf{d}} = 
a^{\mathbf{d}}(\bm{\Phi}_j ,\bm{\Phi}_i) & \qquad i,j = 1, ... , N_T\\
& \mathbf{L}_i^{\mathbf{d}} = \mathbf{F}^{\mathbf{d}}
(\bm{\Phi}_i) & \qquad i = 1, ... , N_T, 
\end{align*}
$N_T$ being the number of dofs for each component of the displacement
vector $\mathbf{d}$. The stiffness matrix $A^{\mathbf{d}}$ is symmetric
and positive definite from which it follows that~\eqref{eq:mec_bloc} is
uniquely solvable. 
No ill-conditioning due to material compressibility affects the 
linear elastic mechanical problem 
because the material Poisson ratio $\nu$ is far from the limit value
$0.5$ (cf. Tab.~\ref{tab:PNPdata}). Using standard arguments
(see~\cite{hughesbook,brenner2002mathematical}) the  following
error estimate can be proved
$$
|| \mathbf{u} -  \mathbf{u}_h||_{V^{\mathbf d}} \leq C_{\mathbf d} h
$$
where $C_{\mathbf d}$ is a positive constant independent of $h$ and 
depending only on the mesh aspect ratio $\gamma$, on problem data
and on the exact solution of~\eqref{eq:weak_mech}.
The numerical solution of the linear 
algebraic system~\eqref{eq:mec_bloc} is efficiently carried out 
in the MP-FEMOS platform through the use of the Bi-CG iterative 
method (see~\cite{quarteroni2007numerical}, Chapter 4 for a discussion 
of the algorithm and its convergence analysis).

\section{Simulation Results}\label{sec:simulations}

In this section we present the results of the reciprocal interaction 
among the thermal and mechanical aspects and the electrodiffusion 
of ions in a nanoscale biological channel. 
The considered case study is the same channel already investigated
in two spatial dimensions in~\cite{MM8} and, more recently, in three spatial dimensions in~\cite{Airoldi2015}. The biological setting is constituted by
a voltage operated channel in which K$^+$ (potassium) 
and Na$^+$ (sodium) ions are taken into account.
In the first step of our analysis a numerical simulation of the VE-PNP model 
is performed to investigate the coupling effects between the PNP and the Stokes 
systems in a 3D cylindrical geometry used 
as a natural extension of a 2D domain as already demonstrated in~\cite{Airoldi2015}.
In the second step of our analysis we carry out the study of the influence 
of the thermal gradient in the VE-PNP model. 
In the third step of our analysis a mechanical stress 
(with different strengths) is applied at the center of 
the cylinder causing a channel deformation. 
Under such a condition we investigate ion electrodiffusion using the 
VE-PNP model to highlight the variation in ion flow 
induced by channel restriction. 
For completeness of information, Tab.~\ref{tab:PNPdata} summarizes 
the values of all model parameters and also the 
initial and boundary conditions for the VE-PNP system.
\begin{table}\label{tab:PNPdata}
\centering
\begin{tabular}{|l|l|}
\hline
\textbf{Parameter}       &\textbf{value and units}  \\
\hline
$z_{K^+}$		 &$+1$\\
\hline
$z_{Na^+}$	 	 &$+1$\\
\hline
$\overline{K^+}|_{SideA}$	&$2.41 \cdot 10^{20} \, \unit{cm^{-3}}$\\
\hline
$\overline{Na^+}|_{SideA}$  &$3.01 \cdot 10^{19} \, \unit{cm^{-3}}$\\
\hline
$\overline{K^+}|_{SideB}$		&$1.2 \cdot 10^{19} \, \unit{cm^{-3}}$\\
\hline
$\overline{Na^+}|_{SideB}$	&$2.65 \cdot 10^{20} \, \unit{cm^{-3}}$\\
\hline
$\mu_{K^+}$		&$7.2 \cdot 10^{-4} \, \unit{cm^2 V^{-1} s^{-1}}$\\
\hline
$\mu_{Na^+}$		&$5.2 \cdot 10^{-4} \, \unit{cm^2 V^{-1} s^{-1}}$\\
\hline
$C^{init}_{K^+}$    &$1.2 \cdot 10^{19} \, \unit{cm^{-3}}$\\
\hline
$C^{init}_{Na^+}$	&$3.01 \cdot 10^{19} \, \unit{cm^{-3}}$\\
\hline
$\overline{\varphi}|_{SideA}$		 &$0.02 \, \unit{V} \text{ in sec.~\ref{sez1} and~\ref{sez2}}; 0.2 \, \unit{V} \text{ in sec.~\ref{sez3}}$\\
\hline
$\overline{\varphi}|_{SideB}$		 &$0 \, \unit{V}$	\\
\hline
$\overline{p}|_{SideA}$    &$0 \, \unit{N cm^{-2}}$\\
\hline
$\overline{p}|_{SideB}$    &$100 \, \unit{N cm^{-2}}$	\\
\hline
$\mu_f$    &$10^{-7} \, \unit{N s cm^{-2}}$	\\
\hline
$\rho_f$    &$10^{-3} \, \unit{Kg cm^{-3}}$	\\
\hline
$E$ in $\Omega_1$ and $\Omega_3$    &$1.5 \cdot 10^{7} \, 
\unit{N cm^{-2}}$	\\
\hline
$E$ in $\Omega_2$    &$1.5 \cdot 10^{7} \, \unit{N cm^{-2}}$	\\
\hline
$E$ in Cylinder    &$7 \cdot 10^{6} \, \unit{N cm^{-2}}$	\\
\hline
$\nu$ in $\Omega_1$ and $\Omega_3$    &$ 0.3 \  \ $	\\
\hline
$\nu$ in $\Omega_2$    &$ 0.4 \  \ $	\\
\hline
$\nu$ in Cylinder    &$ 0.2 \  \ $	\\
\hline
$T$    &$293.75 \, \unit{K}$	\\
\hline
\end{tabular}
\caption{VE-PNP model parameters in presence of 
thermal and mechanical forces and 
boundary conditions for Poisson and electrodiffusion equations.}
\end{table} 

In Figs.~\ref{fig:cil} and~\ref{fig:domMEC} 
the mesh structures used in the numerical simulations
are reported. The typical number of simplices is 
in the range of 50000 elements. 
Channel length is set at $10 \unit{nm}$ while channel diameter is equal to $2 \unit{nm}$. 
Channel terminals are restricted to the internal cylinder 
of the lateral vertical surfaces: 
the outside structure represents the biological region that is 
assigned to the constriction or to the dilation of the channel itself. 
The Z-axis is aligned with the symmetry axis of the structures and is
oriented from $SideA$ towards $SideB$ terminals.
Cell interior is located at $SideA$ while cell exterior ambient 
is located at $SideB$.
\begin{figure}[htbp]
\centering
\subfigure[\label{fig:cil}]
{\includegraphics[width=0.4\columnwidth]{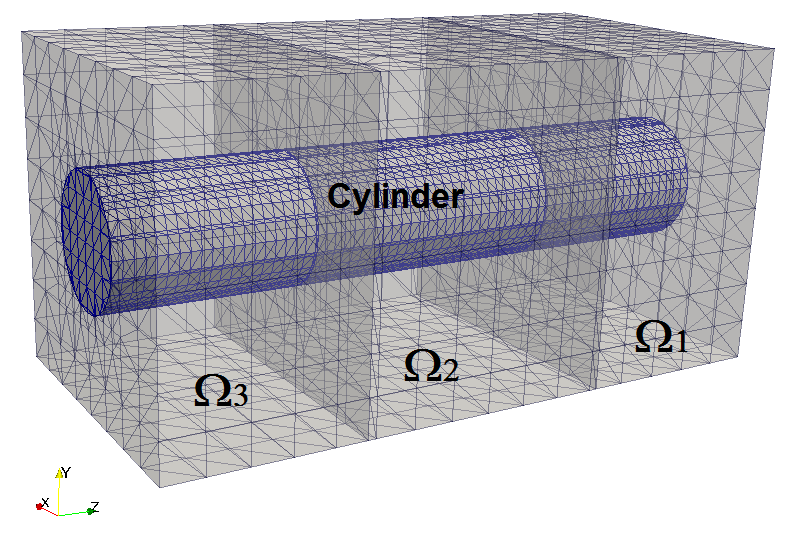}}
\subfigure[\label{fig:domMEC}]
{\includegraphics[width=0.5\columnwidth]{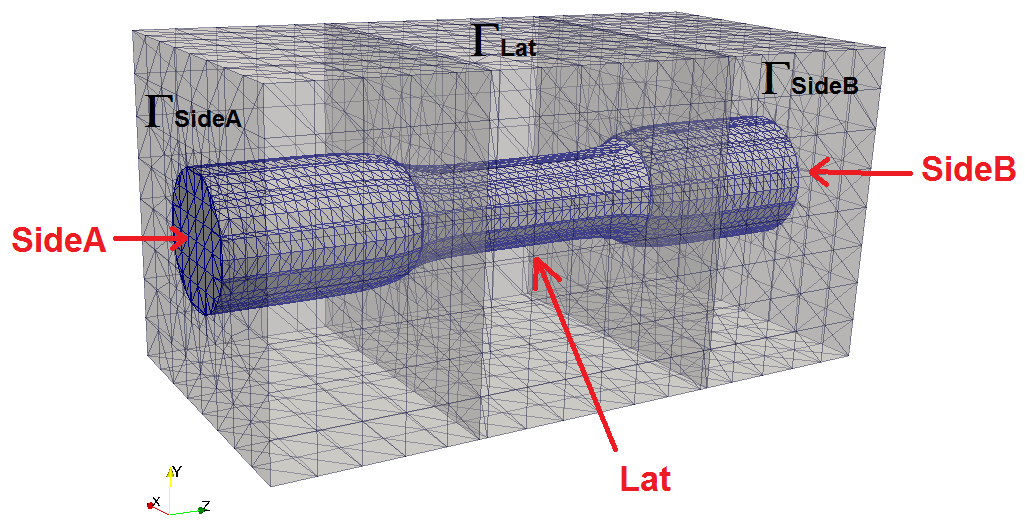}}
\caption{Mesh used in the simulations: (a) channel with no deformation; 
(b) channel with deformation due to the application of 
an internal stress in $\Omega_2$.}
\end{figure}

\subsection{Numerical results for the VE-PNP model}\label{sez1}  

In this case study we investigate
the impact of the velocity term in the VE-PNP model 
at a constant room temperature.
Referring to~\eqref{eq:pnpBC} and~\eqref{eq:stBC}, and according to Tab.~\ref{tab:PNPdata}, Dirichlet boundary conditions 
for ionic concentrations and electrostatic potential are enforced 
on $SideA$ (cell-inside) and $SideB$ (cell-outside) 
surfaces while homogeneous Neumann conditions hold elsewhere.
In the Stokes problem~\eqref{eq:stpr} Neumann boundary 
conditions are applied on $SideA$ and $SideB$,
with a pressure drop of $800 \, \unit{N cm^{-2}}$ 
whereas homogeneous Dirichlet conditions for fluid velocity 
hold elsewhere (adherence condition). 
The initial values are set at $C_{K^+}^{init}$ 
and $C_{Na^+}^{init}$ for the ionic concentrations and equal to
zero for the electrolyte fluid velocity.
\begin{figure}[htbp]
\centering
\subfigure[{Electrostatic potential.}\label{fig:3Dtestpot}]
{\includegraphics[width=0.48\textwidth]{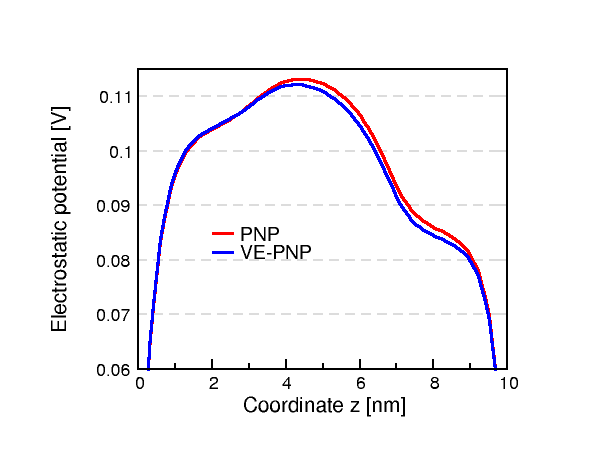}}
\subfigure[{Electric field.}\label{fig:EFM_modello}]
{\includegraphics[width=0.48\textwidth]{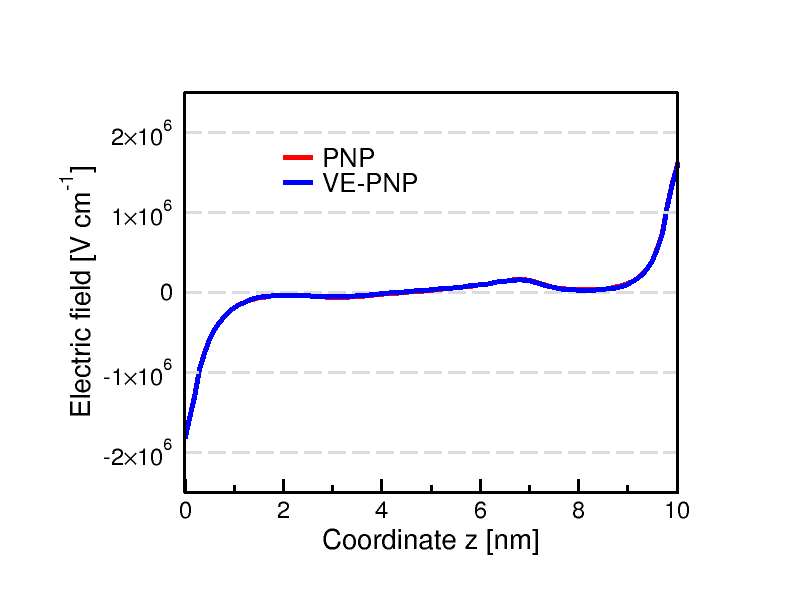} }
\caption{Comparison between VE-PNP and PNP models of the 
electrostatic potential (a) and electric field (b). 1D cut is along 
the channel axis at $t=50 \unit{ns}$.}
\label{fig:modello}
\end{figure}

Fig.~\ref{fig:3Dtestpot} and fig.~\ref{fig:EFM_modello} 
show at time $t=50 \unit{ns}$ the profile of electrostatic potential
and of the $z$ component of the 
electric field obtained along the cylinder 
symmetry axis for both VE-PNP and PNP-only models.
The onset of pronounced bumps in the electrostatic potential
is responsible for a nonuniform contribution of the 
electric field to ion electrodiffusion. As a matter of fact, 
close to the interior part of the channel drift forces are 
directed towards cell interior while at the end of the 
channel the drift forces point outward of the cell.
The difference between the prediction of the VE-PNP and PNP models
is not so relevant in this case, as confirmed by the 
fact that the small variations in the potential plot completely
disappear in the electric field plot.
\begin{figure}[h!]
\centering
\includegraphics[width=0.75\columnwidth]{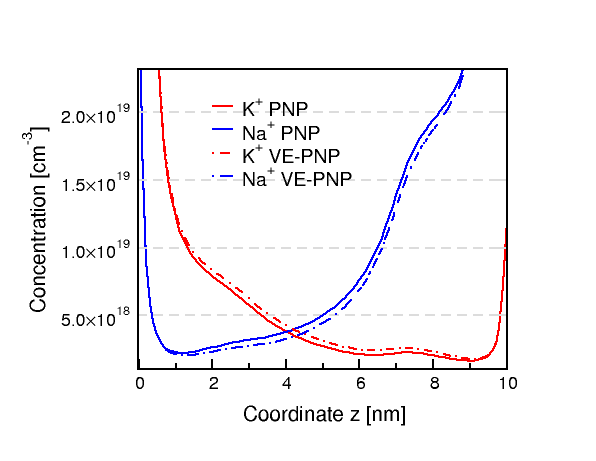}
\caption{Comparison between PNP and VE-PNP models of the 
K$^+$ and Na$^+$ ionic concentrations. 1D cut is along 
the channel axis at $t=50 \unit{ns}$.}
\label{fig:onlyPNP1}
\end{figure}

Fig.~\ref{fig:onlyPNP1} shows the profile of K$^+$ and Na$^+$ 
ion concentrations. Since the $z$-component of 
the electrolyte fluid velocity is positive the K$^+$ species 
is more diffused towards the positive $z$-axis in the case of the 
VE-PNP model.
Conversely, in the case of Na$^+$ the electrolyte fluid flow causes 
a reduction of diffusion towards cell interior.
The asymmetry in the ionic concentration 
profiles as shown in fig.~\ref{fig:onlyPNP1}
is to be ascribed to the cumulative effect of drift and diffusion
forces: in fact, close to $SideA$ and $Side B$ drift is 
opposing to diffusion but 
K$^+$ ions have a higher diffusion gradient and lower drift force.  
In fig.~\ref{fig:3Dtestvelocitycyl} a 
vector field plot of the electrolyte fluid velocity
is shown at time $t=50 \unit{ns}$: as anticipated, 
velocity is directed towards the positive $z$-axis
due to the imposed boundary conditions. 
\begin{figure}[h!]
\centering
\includegraphics[width=0.75\columnwidth]{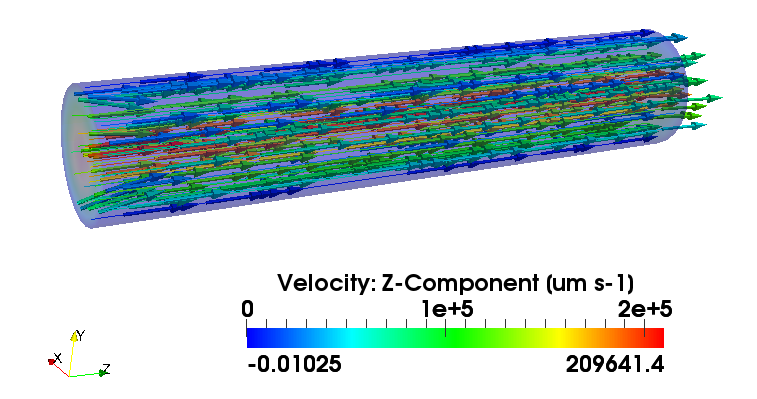}
\caption{VE-PNP model: 3D view of electrolyte fluid velocity 
vector field inside the channel at $t=50 \unit{ns}$.}
\label{fig:3Dtestvelocitycyl}
\end{figure}


\subsubsection{Validation of the Hughes-Franca-Balestra formulation} \label{sez1.1}

As already pointed out in sec.~\ref{Stokes_discr} the use of a stabilized
finite element space in the discretization of~\eqref{eq:stpr} 
considerably reduces the number of dofs and, as a consequence, the 
computational cost. The HFB methodology, however, requires 
an appropriate choice of the stabilization parameter $\delta$. This issue
is documented in the example below.
Fig.~\ref{fig:P1stabvel} and fig.~\ref{fig:P1stabp} show
the velocity and the pressure profiles 
in a case study on a cylinder of $100 \unit{nm}$ length
and $25 \unit{nm}$ of diameter
where on $SideA$ the $z$-component of the velocity
is set equal to $0.1 \unit{\mu m s^{-1}}$,
on $SideB$ the Neumann condition 
$\underline{\underline{\sigma}}\mathbf{n}=2 \cdot 10^{-4} \mathbf{n} 
\unit{Pa}$ is applied while a no-stress condition 
is applied on the lateral surface.
The numerical solutions obtained using MP-FEMOS with the TH or HFB pairs
are indistinguishable from those computed by the commercial 
software COMSOL~\cite{COMSOL}.
\begin{figure}[htbp]
\centering
\subfigure[\label{fig:P1stabvel}]
{\includegraphics[width=0.48\columnwidth]{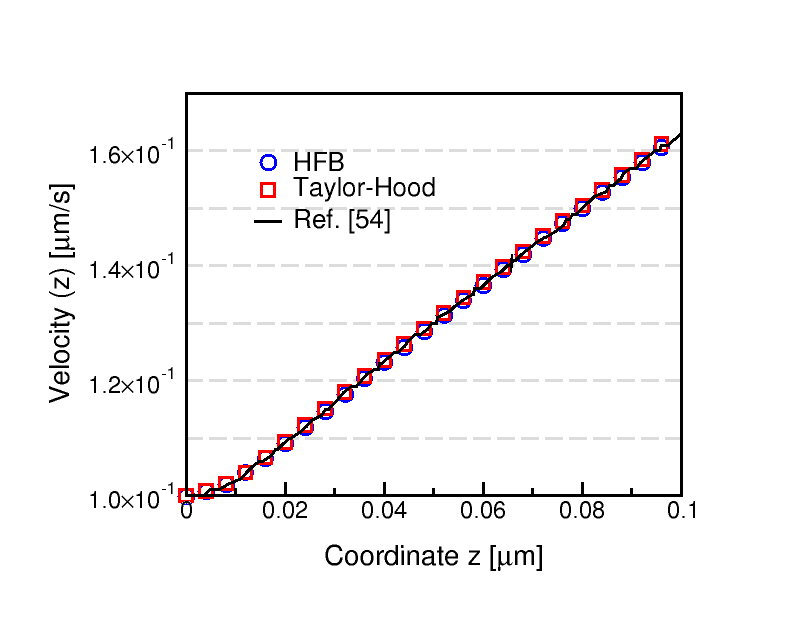}}
\subfigure[\label{fig:P1stabp}]
{\includegraphics[width=0.48\columnwidth]{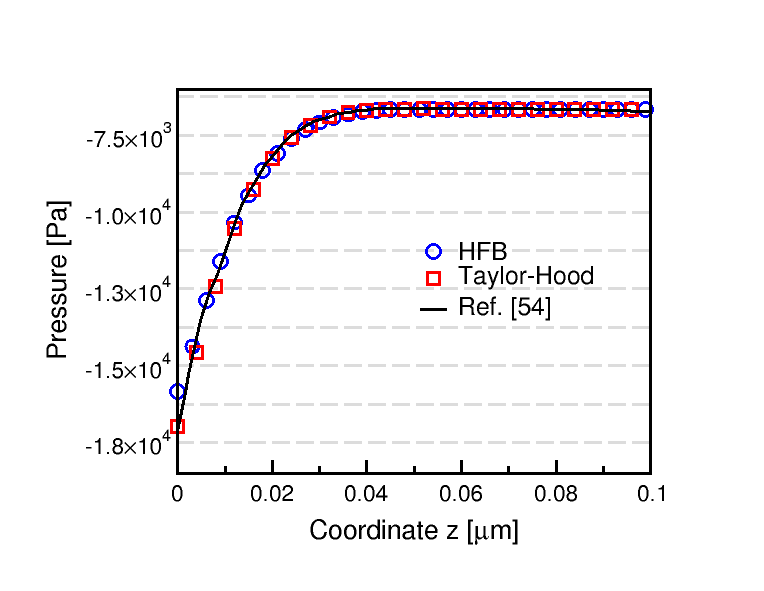} }
\caption{Comparison between the different discretization spaces for eq.~\eqref{eq:stpr}
and the result obtained using the commercial software COMSOL~\cite{COMSOL}: 
(a) velocity along $z$, (b) fluid pressure.}
\end{figure}

\subsection{Effects of a thermal gradient in the VE-PNP model}\label{sez2}

In the context of the numerical simulation of ion migration in phase change materials for electronics applications~\cite{NovielliIEDM2013} we have introduced the effect of a thermal gradient in the physical system
as a further driving force in the basic electrodiffusion model of ions~\eqref{eq:massPNP}.
In this section we aim to understand the role of such phenomenon 
in the context of a biological environment.
To verify this issue we perform a computational test in which we have 
deliberately chosen a non-physiological value of the temperature
located outside the cell ($SideB$ terminal).
The thermal gradient is consequent result of the following 
boundary conditions in~\eqref{eq:firstTE}:
\begin{align*}
&T = 293.75 \ \unit{K}& \ \ \ \mbox{on} \ \mbox{Bot} &\\ 
&T = 343.75 \ \unit{K}& \ \ \ \mbox{on} \ \mbox{Top} &\\
&T = 373.75 \ \unit{K}& \ \ \ \mbox{on} \ \mbox{Top} &\\ 
&\nabla T \cdot \mathbf{n} = 0 &  on \ \Gamma_N^{n_i}.&
\end{align*}
We named the previous conditions as $case 1$ and 
$case 2$ respectively where we have
neglected any heat generation phenomena in~\eqref{eq:Teq}.
The thermal profiles found inside the ionic channel after the simulation
are complicated by the fact that the Dirichlet boundary conditions
 are applied only on the channel terminals and not on the 
 whole $\Gamma$ as 
shown in Figs.~\ref{fig:temp3D} and~\ref{fig:temp1D}.
\begin{figure}[htbp]
\subfigure[\label{fig:temp3D}]
{\includegraphics[width=0.55\columnwidth]{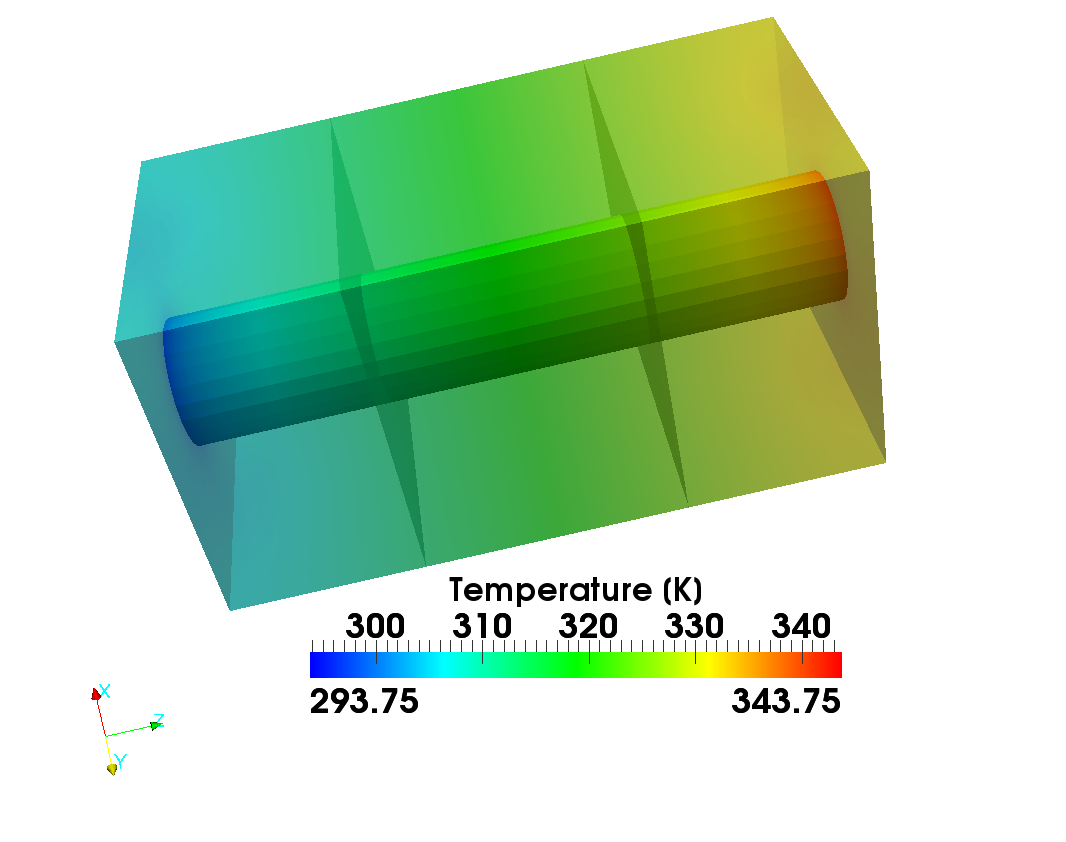}}
\subfigure[\label{fig:temp1D}]
{\includegraphics[width=0.5\columnwidth]{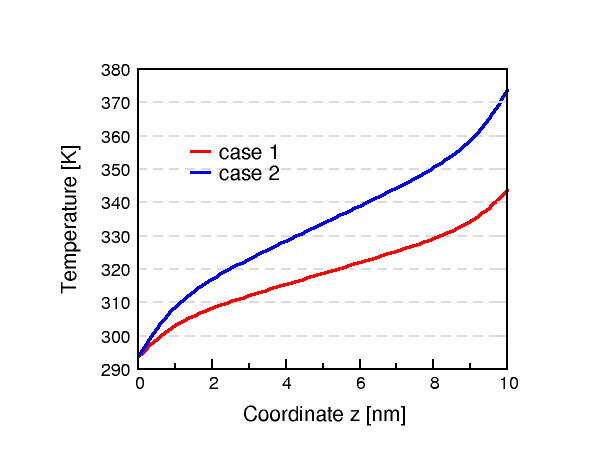} }
\caption{Channel thermal profiles resulting from~\eqref{eq:Teq}: (a) 
3D view of $case1$; (b) 1D-cut along the geometrical symmetry axis
for both $case1$ and $case2$.}
\end{figure}

Figs.~\ref{fig:Ktemp} and~\ref{fig:Natemp} show
the effect of the thermal gradient on K$^+$ and Na$^+$ 
concentration profiles: 
1D-cut are at the final time $T_f=50 \unit{ns}$
along the geometrical symmetry axis. Black color refers to
the constant temperature case $T=300 \unit{K}$, 
the red color to $case 1$ and the blue to $case 2$.
As the thermal gradient increases, the ionic species 
are drifted towards the $SideA$ surface, as prescribed by the 
negative sign in eq.~\eqref{eq:Teq}: Na$^+$ is more diffused
while K$^+$ is less.
\begin{figure}[htbp]
\subfigure[\label{fig:Ktemp}]
{\includegraphics[width=0.5\columnwidth]{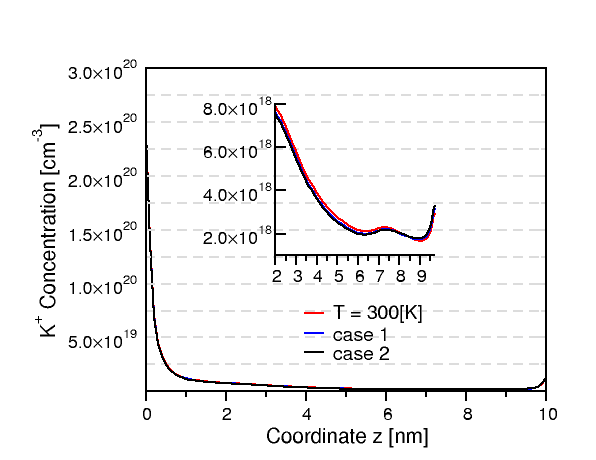}}
\subfigure[\label{fig:Natemp}]
{\includegraphics[width=0.5\columnwidth]{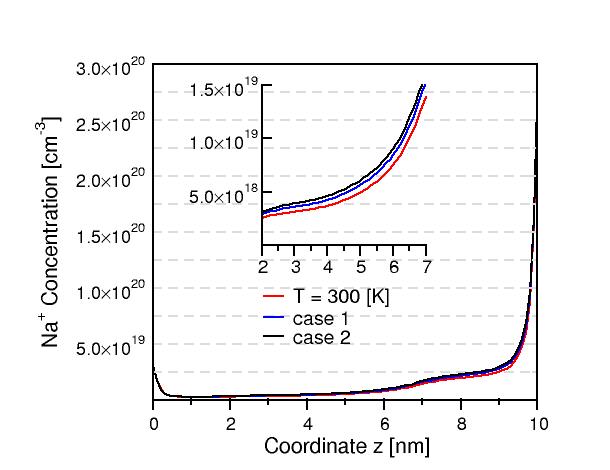} }
\caption{ K$^+$ (a) and Na$^+$ (b) concentration profiles: 
red color constant temperature at $T=300 \unit{K}$, 
blue color $case 1$ and black color $case 2$.}
\end{figure}

The electrostatic potential, shown in fig.~\ref{fig:pottemp},
is increased in the left
part of the channel due to the higher ion concentrations
in that region when the thermal gradient is not null.
For the same reason the influence of the 
ionic pressure onto the fluid is more pronounced as shown
in fig.~\ref{fig:veltemp}.  

\begin{figure}[htbp]
\subfigure[\label{fig:pottemp}]
{\includegraphics[width=0.5\columnwidth]{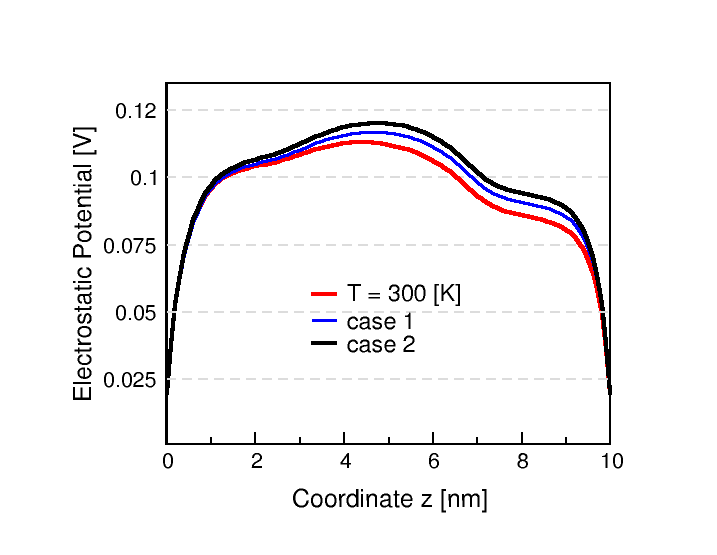}}
\subfigure[\label{fig:veltemp}]
{\includegraphics[width=0.5\columnwidth]{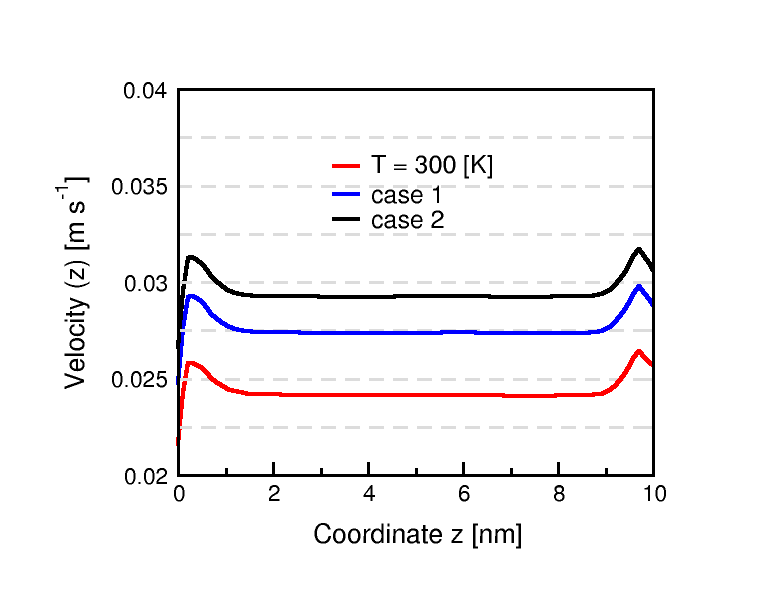} }
\caption{Electrostatic potential (a) and 
$z$-component of the fluid velocity (b) profiles: 
red color constant temperature at $T=300 \unit{K}$, 
blue color $case 1$ and black color $case 2$.}
\end{figure} 

From the computational analysis of this section we can conclude
that the contribution of a thermal driving force to ionic
flow is not relevant in biological systems unlike the case 
of semiconductor materials illustrated in~\cite{NovielliIEDM2013}.

%

\subsection{Numerical results for the VE-PNP model considering mechanical stress}\label{sez3}

In this section we demonstrate the effect on ion electrodiffusion 
due to the action of external mechanical forces able to modify the
size of the biological channel. This aspect of cellular biology 
is fundamental in many control mechanisms of a living system, for example,
heart beat, muscle activity or cerebrovascular autoregulation
(see~\cite{KeenerSneyd1998} for details and further reference).
As discussed in sec.~\ref{sec:algorithms}, the solution of the mechanical problem~\eqref{eq:MECeq} is used to determine the finite element mesh transformation that yields the new domain geometry
in which~\eqref{eq:pnpprob} and~\eqref{eq:stpr} are solved
with a split domain algorithm.
The following boundary conditions are applied to solve~\eqref{eq:MECeq} where
$\Gamma_N$ is the lateral surface of the whole parallelepiped,
$\Omega_1$, $\Omega_2$ and $\Omega_3$ 
are three separate regions with different mechanical properties 
as reported in Tab.~\ref{tab:PNPdata}.

\begin{subequations}
\begin{align}
&\vect{d_D} = [0, 0, 0] ^T \, \unit{\mu m} & 
\text{on } SideA\  \cup SideB \cup \Gamma_L \cup \Gamma_B \cup \Gamma_T 
\label{MECbc_a} \\
&\vect{f_{mec}} = [0, 0, 0] ^T \, \unit{N cm^{-3}} & 
\text{in }\Omega_1 \cup \Omega_2 \cup \Omega_3 \label{MECbc_b}\\
&\underline{\underline{\sigma_0}} = [\gamma,\gamma, 0, 0, 0, 0] ^T 
\, \unit{N cm^{-2}} & \text{in} \ \Omega_2 \label{MECbc_c}\\
&\gamma = -1\cdot 10^7  \label{MECbc_d}\\
&\gamma = -2\cdot 10^7. \label{MECbc_e}
\end{align}
\end{subequations}
We denote condition~\eqref{MECbc_d} as deformation 1 ($def.1$) 
while~\eqref{MECbc_e} as deformation 2 ($def.2$): the case 
with no deformation is indicated with $no-def.$.
An example of deformed mesh obtained after the solution 
of~\eqref{eq:MECeq} in the case of $def.1$ is 
shown in fig.~\ref{fig:domMEC}. The deformed mesh is 
obtained by simply stretching each element according to the 
computed displacement and avoiding the insertion of
any new vertex or further mesh refinement. 

Fig.~\ref{fig:onlyStokes} shows 
the impact of channel size modification on the 
fluid velocity profiles ($z$-component) (no-ionic pressure is considered). 
Results have been obtained by solving only~\eqref{eq:stpr} 
in the deformed domain, with 1D-cut 
taken along the geometrical symmetry axis.
As expected, the average value of the velocity decreases with 
the increase of deformation, 
but in the region where the channel is restricted
the electrolyte fluid velocity increases.
\begin{figure}[h!]
\centering
\includegraphics[width=0.75\columnwidth]{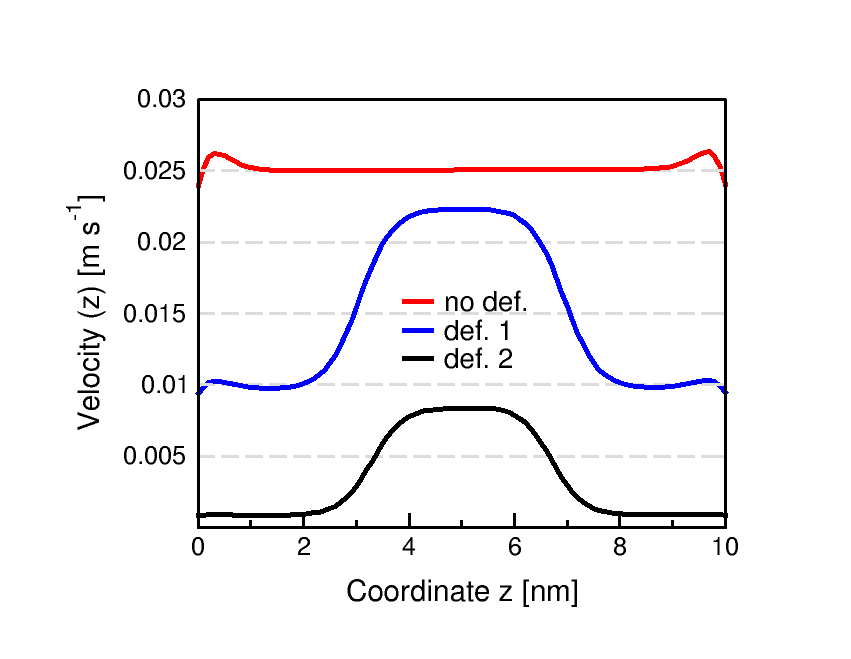}
\caption{Stokes equation in stationary conditions: 
1D-cut along the geometrical symmetry axis of
the $z$-component of the fluid 
velocity before and after mesh deformation.}
\label{fig:onlyStokes}
\end{figure}

\begin{figure}[h!]
\subfigure[\label{fig:MECpot1D}]
{\includegraphics[width=0.48\columnwidth]{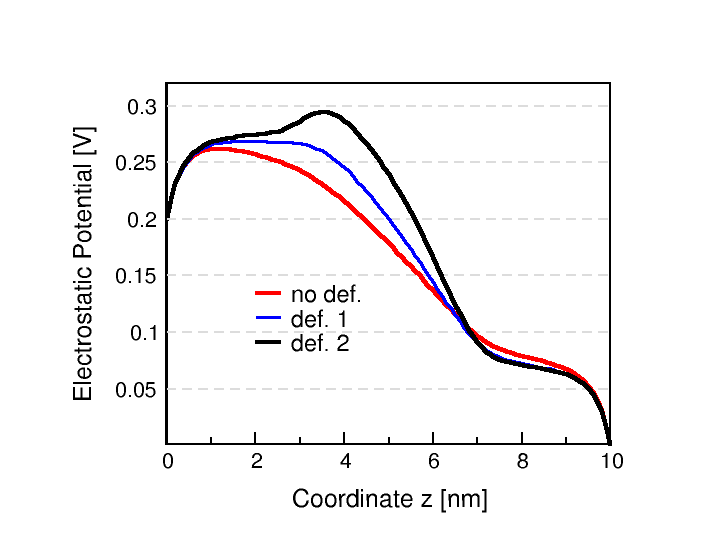} }
\subfigure[\label{fig:MECE1D}]
{\includegraphics[width=0.48\columnwidth]{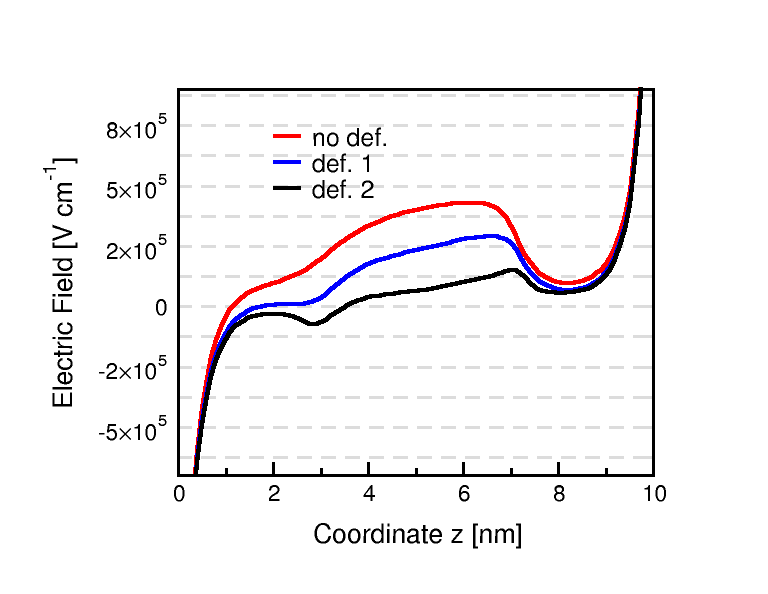}}
\caption{VE-PNP model in presence 
of mechanical deformation:
1D-cut along the geometrical symmetry axis
of the electrostatic potential (a) and of the electric field (b)
at the final time $T_f=50 \unit{ns}$.}
\end{figure}

Before describing the ion concentration profiles 
computed by the VE-PNP model in presence of mechanical forces
it is important to look at the profiles of the driving 
forces acting on ion electrodiffusion.
Figs.~\ref{fig:MECpot1D} and~\ref{fig:MECE1D} illustrate
the electrostatic potential and electric field
profiles with and without deformation.
Increasing the deformation the electric field strength 
in the channel center is quite reduced while the region
close to $SideA$ where the electric field is negative is extended towards
the channel center. In all cases the electric field drop
close to $SideB$ corresponds to the almost flat profile in the 
electrostatic potential.
\begin{figure}[h!]
\centering
\includegraphics[width=0.5\columnwidth]{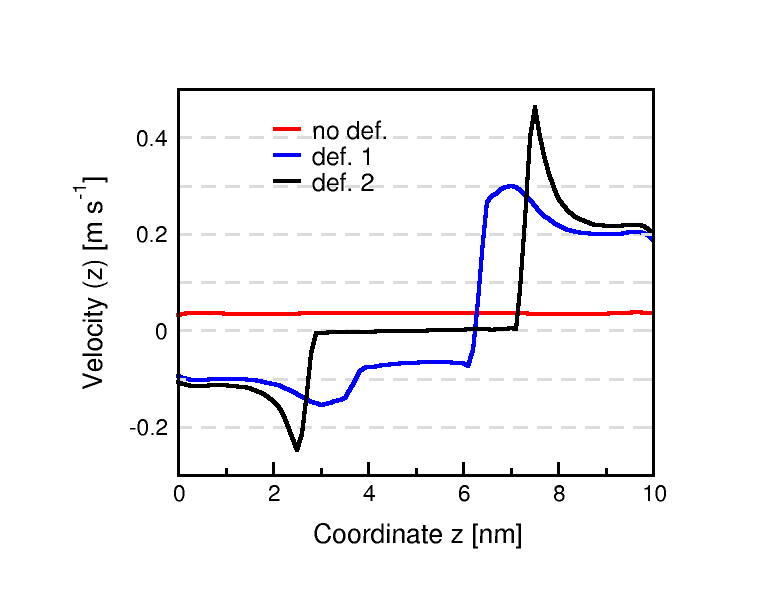}
\caption{VE-PNP model in presence 
of the mechanical deformation:
1D-cut along the geometrical symmetry axis of the $z$-component of the 
fluid velocity at the final time $T_f=50 \unit{ns}$
clearly highlight the effect of ionic pressure on
the fluid.}
\label{fig:MECvel1D}
\end{figure}

The effects of ionic pressure on the fluid velocity
are shown in fig.~\ref{fig:MECvel1D}: not only is the velocity
not constant in the channel but also it undergoes a change of sign
(inversion of electrolyte flow).
From the cell-inside to the two third of the channel the velocity
is directed towards the cell while in the final part of the channel 
it moves in the opposite direction. Moreover increasing
deformation, a reduction of the velocity at the center of the 
channel is found due to the reduction of the electric field.

Fig.~\ref{fig:MECconc1D} illustrates the K$^+$ and Na$^+$ concentration
profiles.
For both species the main variation with respect to the unperturbed 
channel configuration is obtained in the center portion of the channel
but with a remarkable difference. 

In the case of K$^+$, Fick's diffusion is towards the end of the channel
as the electric field and velocity in the case of
normal size channel. This produces the K$^+$ accumulation 
at the end of the channel: the Dirichlet boundary condition
causes the reduction of the concentration at $SideB$.
With the deformation, fluid velocity becomes negative and is
directed towards the cell so that accumulation no longer occurs.

In the case of Na$^+$, Fick's diffusion is directed
towards the cell while the velocity is towards the
outside with an increased strength under deformation: 
this is the reason why the deformed 
channel has a less diffused profile.
\begin{figure}[h!]
\centering
\subfigure[\label{fig:kdef}]
{\includegraphics[width=0.475\columnwidth]{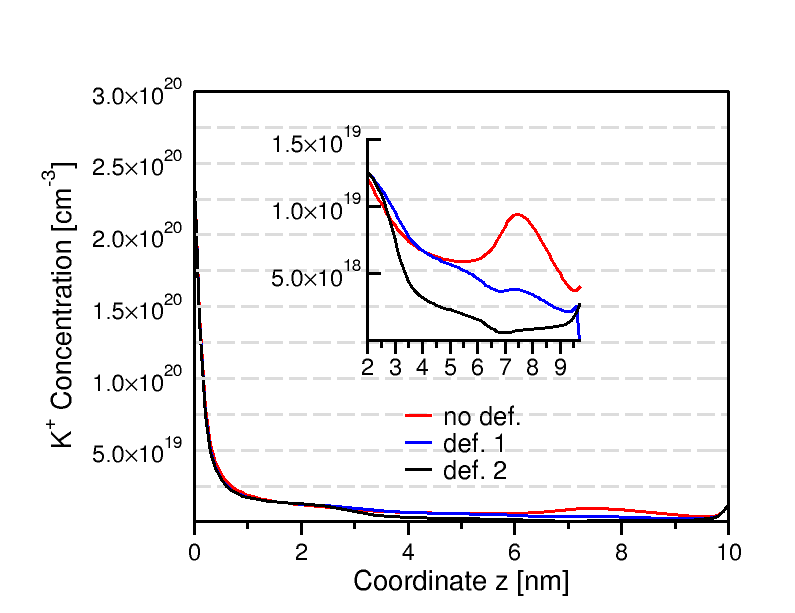}}
\subfigure[\label{fig:Nadef}]
{\includegraphics[width=0.475\columnwidth]{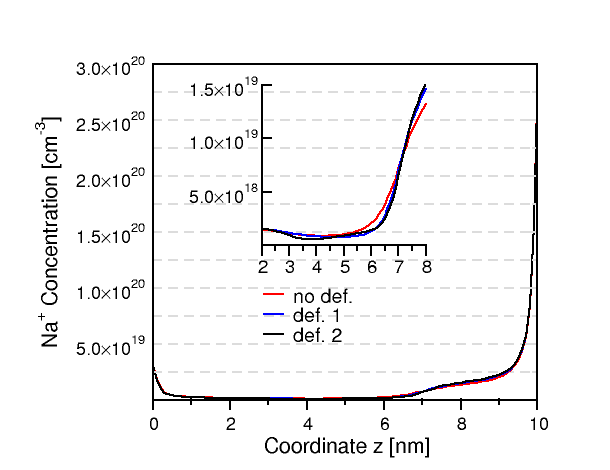} }
\caption{VE-PNP model in presence of mechanical deformation:
1D-cut along the geometrical symmetry axis of the ionic concentration
at the final time $T_f=50 \unit{ns}$: (a) K$^+$, (b) Na$^+$.}
\label{fig:MECconc1D}
\end{figure}

The 3D plots of fig.~\ref{fig:MECdispl3D} show how
the deformation is along the normal direction of the channel axis
and it increases with the 
deformation strength $\gamma$ in~\eqref{MECbc_d} and~\eqref{MECbc_e}.
As a consequence, channel diameter is further and further reduced.
Fig.~\ref{fig:MECpot3D} shows how the potential profile
of ~\ref{fig:MECpot1D} in a 3D view is uniform across the
channel with no peaks induced by the possible
accumulation of ionic charge at the channel surfaces.
Fig.~\ref{fig:MECvel3D} shows the 3D view of
the $z$-component of the fluid velocity: the profile
across the cell is nonuniform  also in the undeformed
case due to the adherence condition: the more
the deformation, the more the nonuniformity of velocity distribution.
Fig.~\ref{fig:MECpress3D} illustrates the 3D profile
of Na$^+$ ionic concentration. As in the case of electrolyte fluid,
the increased channel deformation significantly affects the nonuniformity
of the 3D distribution of the sodium ion inside the channel.
This effect might be very important in regulating the electrostatic 
coupling between the mobile charge distribution in the channel 
and the permanent charge that is usually trapped inside the lipid
membrane bilayer surrounding the channel. This biophysical aspect
of the problem has been investigated 
by Prof.~W.~Nonner and coworkers in~\cite{Malasics2009,Boda2009}
and will be object of a future step of our research.
\begin{figure}[htbp]
\centering
\subfigure[\label{fig:MECdispl3D}]
{\includegraphics[width=0.35\columnwidth]{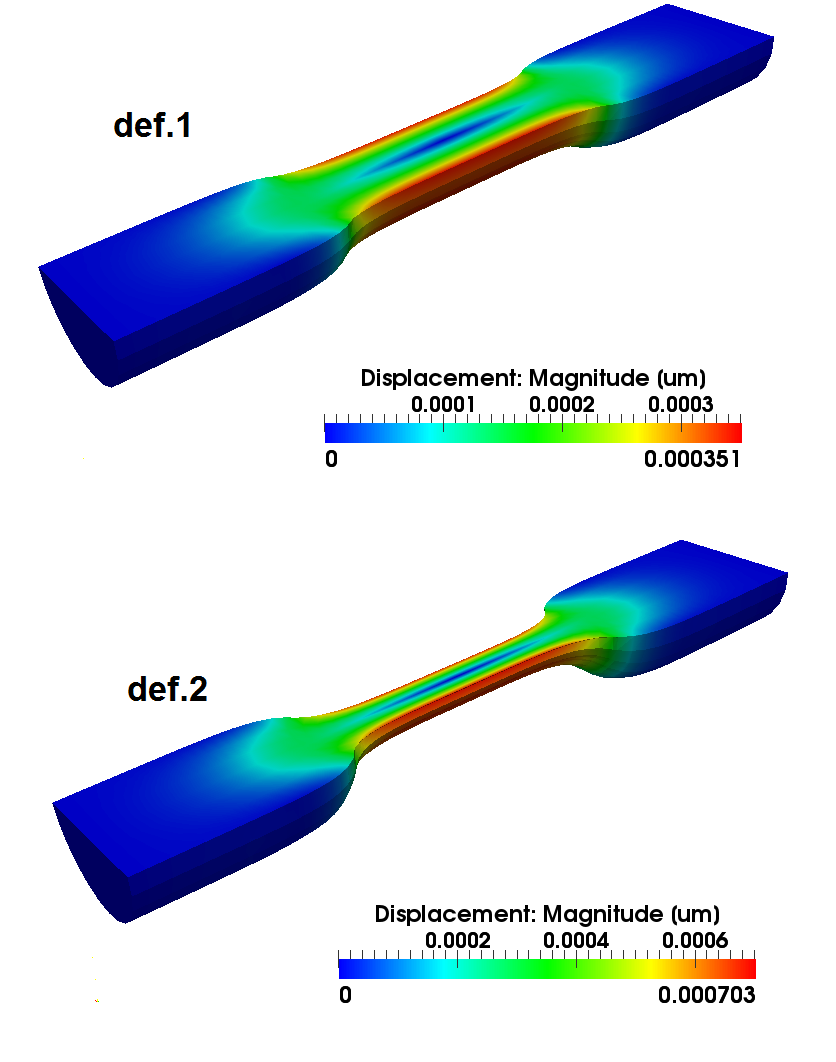}}
\subfigure[\label{fig:MECpot3D}]
{\includegraphics[width=0.3\columnwidth]{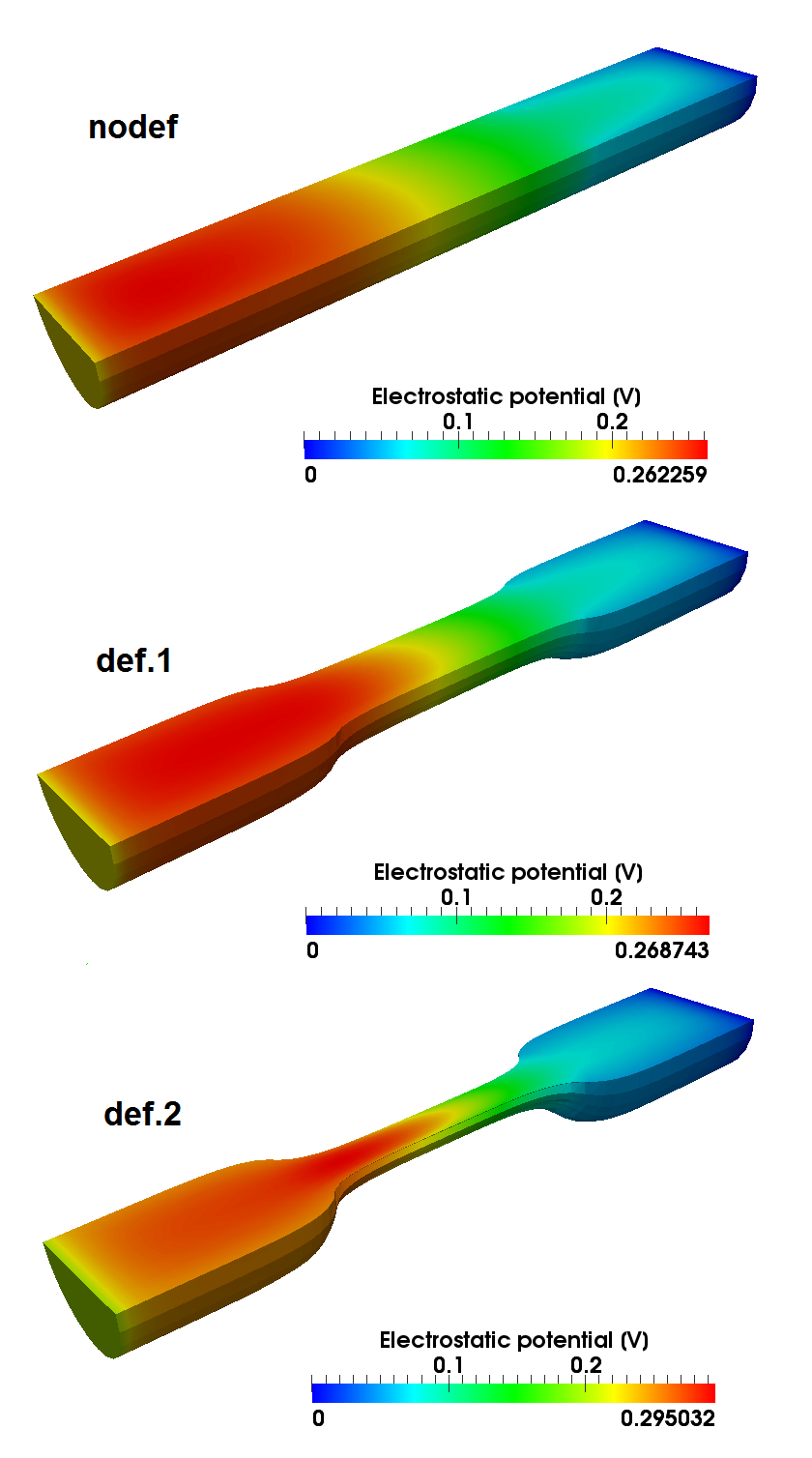} }
\caption{VE-PNP model in presence 
of mechanical deformation:
3D interior view of displacement (a) and electrostatic potential
(b) at the final time $T_f=50 \unit{ns}$.}
\end{figure}

%

\begin{figure}[htbp]
\centering
\subfigure[\label{fig:MECvel3D}]
{\includegraphics[width=0.35\columnwidth]{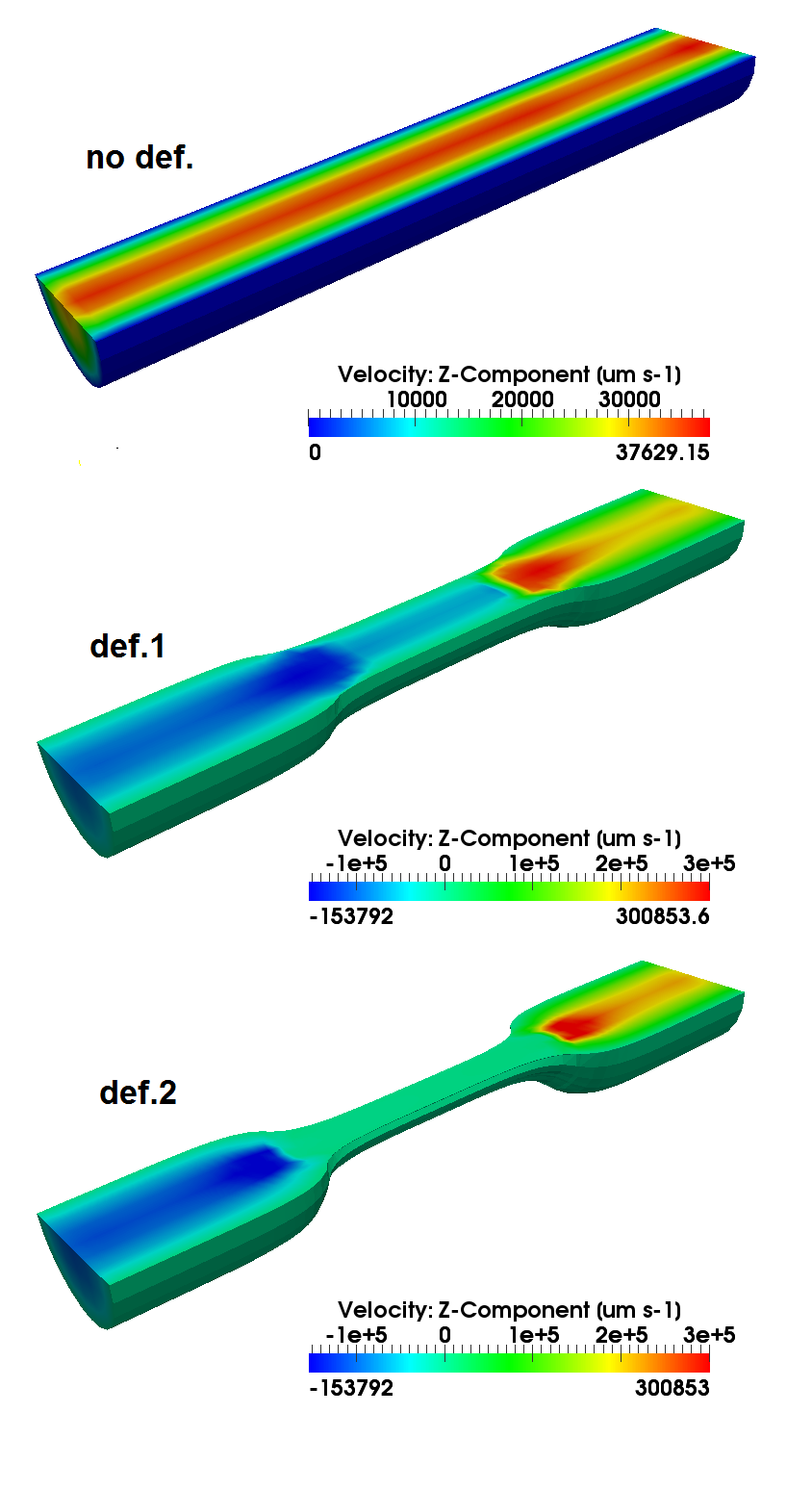}}
\subfigure[\label{fig:MECpress3D}]
{\includegraphics[width=0.35\columnwidth]{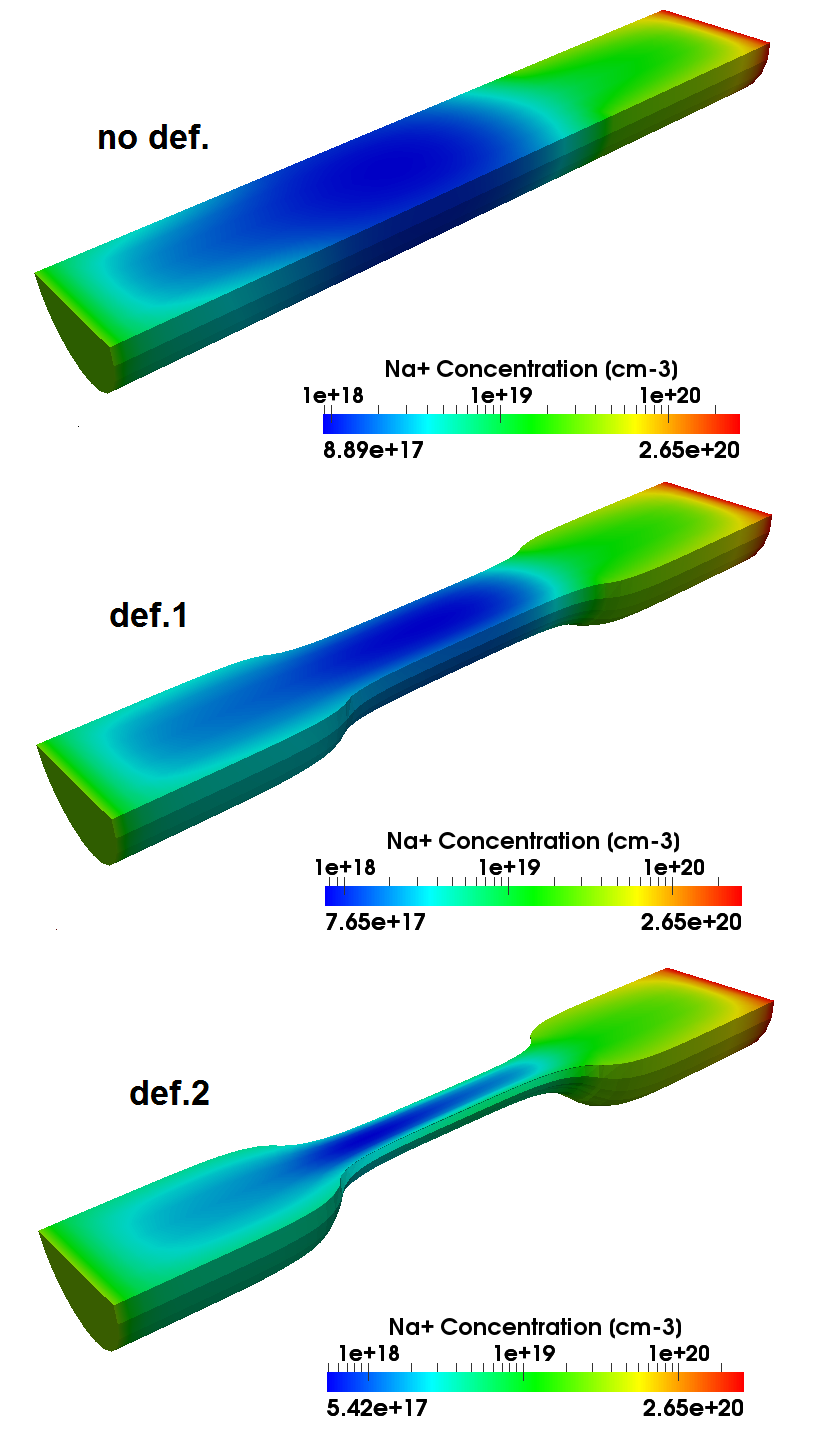} }
\caption{VE-PNP model in presence 
of mechanical deformation:
3D interior view of the $z$-component of fluid velocity (a) 
and of Na$^+$ concentration (b) at the final time $T_f=50\unit{ns}$.}
\end{figure}

%

%

\section{Conclusions and Future Perspectives}\label{sec:conclusions}

In this article we have carried out a 
three-dimensional modeling and simulation, 
in three spatial dimensions and in time dependent conditions, 
of biological ion channels using a continuum-based approach. 
The proposed model is a multi-physics formulation that is capable to 
combine, to the best of our knowledge for the first time, 
ion electrodiffusion, channel fluid motion, thermal self-heating
and mechanical deformation. The full self-consistency is intended
for further study. Here, the thermal and mechanical effects 
are unidirectional. 

The resulting mathematical picture is a system of nonlinearly
coupled partial differential equations in conservation form 
that describes the fundamental principles of ion charge and momentum 
balance, heat flow balance, electrolyte mass and momentum balance
and mechanical balance in the nanochannel.

A functional iteration that decouples the various differential 
subblocks of the system is introduced with the purpose of 
facilitating the computational implementation. Then, each resulting
subproblem is discretized using the Galerkin Finite Element Method.

The validation of the proposed computational model is carried out 
by simulating a realistic cylindrical voltage operated ion nanochannel 
where K$^+$ and Na$^+$ ions are simultaneously flowing. 
Three sets of numerical experiments are performed and thoroughly
discussed. We first investigate the coupling between 
electrochemical and fluid-dynamical effects. Then, we 
enrich the modeling picture by investigating the influence of a thermal gradient. Finally, we add a mechanical stress 
responsible of channel deformation and investigate its effect 
on the functional response of the channel. 
Results show
that fluid and thermal fields have no influence 
in absence of mechanical deformation whereas ion distributions and 
channel functional response are significantly modified 
if mechanical stress is included in the model. These predictions agree with 
biophysical conjectures on the importance of protein conformation 
in the modulation of channel electrochemical properties 
developed by Prof.~W.~Nonner and coworkers in~\cite{Malasics2009,Boda2009}.

Future extensions to overcome present model/methods limitations include, 
being not restricted to:
\begin{itemize}
\item the coupling between stress and deformation fields
and the PNP system to self-consistently 
calculate the deformation of channel wall;
\item a biophysically sound characterization of the heat production term 
$Q_{heat}$ in the thermal block to account the effect of 
chemical reactions at the entrance of channel mouth between ions
and, for example, externally supplied drugs;
\item the electrostatic 
coupling between the mobile charge distribution in the channel 
and the permanent charge trapped inside the lipid
membrane bilayer;
\item the extension and analysis of the EAFE formulation 
to account for electrolyte velocity in the thermo-electrical 
drift term in the discretization of the T-VE-PNP model.
\end{itemize}

\bibliographystyle{plain} 
\bibliography{biblio}

\begin{thebibliography}{10}

\bibitem{Airoldi2015}
P.~Airoldi, A.~G. Mauri, R.~Sacco, and J.~W. Jerome.
\newblock Three-dimensional numerical simulation of ion nanochannels.
\newblock {\em Journal of Coupled System and Multiscale Dynamics}, 3(1):57--65,
  2015.

\bibitem{attwellglial2010}
David Attwell, Alastair~M. Buchan, Serge Charpak, Martin Lauritzen, Brian~A.
  MacVicar, and Eric~A. Newman.
\newblock Glial and neuronal control of brain blood flow.
\newblock {\em Nature}, 468(7321):232--243, 2010.

\bibitem{BabuskaMixMet}
I.~Babuska and J.~E. Osborn.
\newblock Generalized finite element methods: Their performance and their
  relation to mixed methods.
\newblock {\em SIAM J. Numer. Anal.}, 20:510--536, 1983.

\bibitem{BarcilonPartI1992}
V.~Barcilon.
\newblock Ion flow through narrow membrane channels: Part i.
\newblock {\em SIAM J. Appl. Math.}, 52(5):1391--1404, 1992.

\bibitem{Barcilon}
V.~Barcilon, D.~Chen, R.~S. Eisenberg, and J.~W. Jerome.
\newblock Qualitative properties of steady-state {Poisson-Nernst-Planck}
  systems: perturbation and simulation study.
\newblock {\em SIAM J. Appl. Math.}, 57(3):631--648, 1997.

\bibitem{BarcilonPartII1992}
Victor Barcilon, D.-P. Chen, and R.~S. Eisenberg.
\newblock Ion flow through narrow membrane channels: Part ii.
\newblock {\em SIAM Journal on Applied Mathematics}, 52(5):1405--1425, 1992.

\bibitem{Boda2009}
D.~Boda, M.~Valisk\'o, D.~Henderson, B.~Eisenberg, D.~Gillespie, and W.~Nonner.
\newblock Ion selectivity in {L}-type calcium channels by electrostatics and
  hard-core repulsion.
\newblock {\em J. Gen. Physiol.}, 133:497 -- 509, 2009.

\bibitem{braessbook}
D.~Braess.
\newblock {\em Finite Elements: Theory, Fast Solvers, and Applications in Solid
  Mechanics}.
\newblock Cambridge University Press, 2001.

\bibitem{brenner2002mathematical}
S.~Brenner and L.R. Scott.
\newblock {\em The Mathematical Theory of Finite Element Methods}.
\newblock Texts in Applied Mathematics. Springer New York, 2002.

\bibitem{Brezzi1974}
F.~Brezzi.
\newblock On the existence, uniqueness and approximation of saddle-point
  problems arising from lagrangian multipliers.
\newblock {\em ESAIM: Mathematical Modelling and Numerical Analysis -
  Mod{\'e}lisation Math{\'e}matique et Analyse Num{\'e}rique}, 8(R2):129--151,
  1974.

\bibitem{Brezzicmame1989}
F.~Brezzi, L.~D. Marini, and P.~Pietra.
\newblock Numerical simulation of semiconductor devices.
\newblock {\em Comp. Meths. Appl. Mech. Engrg}, 75:493--514, 1989.

\bibitem{Brezzisiam1989}
F.~Brezzi, L.~D. Marini, and P.~Pietra.
\newblock Two-dimensional exponential fitting and applications to
  drift-diffusion models.
\newblock {\em SIAM Journal on Numerical Analysis}, 26(6):1342--1355, 1989.

\bibitem{caterina1997}
M.~J. Caterina, M.~A. Schumacher, M.~Tominaga, T.~A. Rosen, J.~D. Levine, and
  D.~Julius.
\newblock {The capsaicin receptor: a heat-activated ion channel in the pain
  pathway.}
\newblock {\em Nature}, 389(6653):816--824, 1997.

\bibitem{MUMPS}
{CERFACS, ENS Lyon, INPT(ENSEEIHT)-IRIT, Inria and University of Bordeaux}.
\newblock {MUMPS}: a parallel sparse direct solver (2015).

\bibitem{ChangYaoBook}
H.C. Chang and L.~Yao.
\newblock {\em {Electrokinetically Driven Microfluidics and Nanofluidics}}.
\newblock Cambridge University Press, 2009.

\bibitem{COMSOL}
COMSOL.
\newblock {\em Multiphysics, User guide}.
\newblock 2013.

\bibitem{ErmentroutBook}
G.~B. Ermentrout and D.~H. Terman.
\newblock {\em {Mathematical Foundations of Neuroscience}}.
\newblock Springer, 2010.

\bibitem{Friel1992}
D.~D. Friel and R.~W. Tsien.
\newblock A caffeine- and ryanodine-sensitive {C}a$^{2+}$ store in bullfrog
  sympathetic neurones modulates effects of {C}a$^{2+}$ entry on
  [{C}a$^{2+}$]$_i$.
\newblock {\em J Physiol.}, 450:217--246, 1992.

\bibitem{gatti1998new}
E.~Gatti, S.~Micheletti, and R.~Sacco.
\newblock A new {G}alerkin framework for the {D}rift-{D}iffusion equation in
  semiconductors.
\newblock {\em East West Journal of Numerical Mathematics}, 6:101--136, 1998.

\bibitem{gilbarg1998elliptic}
D.~Gilbarg and N.S. Trudinger.
\newblock {\em Elliptic partial differential equations of second order}.
\newblock Grundlehren der mathematischen Wissenschaften. Springer, 1998.

\bibitem{MM8}
M.Longaretti G.Marino~J.W.Jerome, B.Chini and R.Sacco.
\newblock Modeling and simulation of thermo-fluid-electrochemical ion flowion
  biological channel.
\newblock {\em Journal of nanosciences and natechnology}, pages
  8(7):3686--3694(9), 2008.

\bibitem{Hille2001}
B.~Hille.
\newblock {\em Ionic {C}hannels of {E}xcitable {M}embranes}.
\newblock Sinauer Associates, Inc., Sunderland, MA, 2001.

\bibitem{HH1952a}
A.L. Hodgkin and A.F. Huxley.
\newblock Currents carried by sodium and potassium ions through the membrane of
  the giant axon of {\it loligo}.
\newblock {\em Journal of Physiology}, 116:449--472, 1952.

\bibitem{HodgkinHuxley1952d}
A.L. Hodgkin and A.F. Huxley.
\newblock The dual effect of membrane potential on sodium conductance in the
  giant axon of {\it loligo}.
\newblock {\em Journal of Physiology}, 116(4):497--506, 1952.

\bibitem{HH1952b}
A.L. Hodgkin and A.F. Huxley.
\newblock A quantitative description of membrane current and its application to
  conduction and excitation in nerve.
\newblock {\em Journal of Physiology}, 117:500--544, 1952.

\bibitem{HodgkinHuxleyKatz1952c}
A.L. Hodgkin, A.F. Huxley, and B.~Katz.
\newblock Measurement of current-voltage relations in the membrane of the giant
  axon of {\it loligo}.
\newblock {\em Journal of Physiology}, 116(4):424--448, 1952.

\bibitem{hughesbook}
T.~J.~R. Hughes.
\newblock {\em The Finite Element Method: Linear Static and Dynamic Finite
  Element Analysis}.
\newblock Prentice-Hall, 1987.

\bibitem{HughesFrancaBalestra1986}
T~J~R Hughes, L~P Franca, and M~Balestra.
\newblock A new finite element formulation for computational fluid dynamics: V.
  circumventing the {B}abuska-{B}rezzi condition: A stable {P}etrov-{G}alerkin
  formulation of of the {S}tokes problem accommodating equal-order
  interpolations.
\newblock {\em Comput. Methods Appl. Mech. Eng.}, 59(1):85--99, 1986.

\bibitem{hutzler2006high}
M.~Hutzler, A.~Lambacher, B.~Eversmann, M.~Jenkner, R.~Thewes, and P.~Fromherz.
\newblock High-resolution multitransistor array recording of electrical field
  potentials in cultured brain slices.
\newblock {\em Journal of neurophysiology}, 96(3):1638--1645, 2006.

\bibitem{huxley2002overshoot}
A.~Huxley.
\newblock From overshoot to voltage clamp.
\newblock {\em Trends in neurosciences}, 25(11):553--558, 2002.

\bibitem{JeromeBook}
J.~W. Jerome.
\newblock {\em Analysis of charge transport}.
\newblock Springer-Verlag, 1996.

\bibitem{Jerome2002}
J.~W. Jerome.
\newblock Analytical approaches to charge transport in a moving medium.
\newblock {\em Transport Theory and Statistical Physics}, 31:333--366, 2002.

\bibitem{Sacco2009}
J.~W. Jerome and R.~Sacco.
\newblock Global weak solutions for an incompressible charged fluid with
  multi-scale couplings: Initial-boundary value problem.
\newblock {\em Nonlinear Analysis}, 71:e2487--e2497, 2009.

\bibitem{Kedem1958}
O.~Kedem and A.~Katchalsky.
\newblock Thermodynamic analysis of the permeability of biological membranes to
  non-electrolytes.
\newblock {\em Biochimica et Biophysica Acta}, 27:229 -- 246, 1958.

\bibitem{KeenerSneyd1998}
J.~P. Keener and J.~Sneyd.
\newblock {\em Mathematical Physiology}.
\newblock Springer, New York, 1998.

\bibitem{landaufluid}
L.D. Landau and E.M. Lifshitz.
\newblock {\em Fluid Mechanics}.
\newblock Pergamon press, 1959.

\bibitem{Levis_Rae}
R.~A. Levis and J.~L. Rae.
\newblock Technology of patch- clamp electrodes.
\newblock In A.~A. Boulton, G.~B. Baker, and W.~Walz, editors, {\em Patch-Clamp
  Applications and Protocols}, volume~26 of {\em Neuromethods}, pages 1--36.
  Humana Press, 1995.

\bibitem{Malasics2009}
A.~Malasics, D.~Gillespie, W.~Nonner, D.~Henderson, B.~Eisenberg, and D.~Boda.
\newblock Protein structure and ionic selectivity in calcium channels:
  Selectivity filter size, not shape, matters.
\newblock {\em Biochimica et Biophysica Acta (BBA) - Biomembranes},
  1788(12):2471 -- 2480, 2009.

\bibitem{markowich1986stationary}
P.A. Markowich.
\newblock {\em The Stationary Semiconductor Device Equations}.
\newblock Computational Microelectronics. Springer-Verlag, 1986.

\bibitem{Markowich}
P.A. Markowich, C.A. Ringhofer, and C.~Schmeiser.
\newblock {\em Semiconductor Equations}.
\newblock Springer-Verlag, 1990.

\bibitem{Mauri2014currents}
A.~.G. Mauri, A.~Bortolossi, G.~Novielli, and R.~Sacco.
\newblock {3D} finite element modeling and simulation of industrial
  semiconductor devices including impact ionization.
\newblock {\em Journal of Mathematics in Industry}, 5:1--18, 2015.
\newblock doi:10.1186/s13362-015-0015-z.

\bibitem{Mauri2014}
A.~G. Mauri, R.~Sacco, and M.~Verri.
\newblock Electro-thermo-chemical computational models for {3D} heterogeneous
  semiconductor device simulation.
\newblock {\em Applied Mathematical Modelling}, 39(14):4057--4074, 2014.

\bibitem{neher1992patch}
E.~Neher and B.~Sakmann.
\newblock The patch clamp technique.
\newblock {\em Scientific American}, 266(3):44--51, 1992.

\bibitem{Newman2013}
E.A. Newman.
\newblock Functional hyperemia and mechanisms of neurovascular coupling in the
  retinal vasculature.
\newblock {\em J Cereb Blood Flow Metab}, 33(11):1685--95, 2013.

\bibitem{NovielliIEDM2013}
G.~Novielli, A.~Ghetti, E.~Varesi, A.~G. Mauri, and R.~Sacco.
\newblock Atomic migration in phase change materials.
\newblock In {\em Electron Devices Meeting (IEDM), 2013 IEEE International},
  pages 22.3.1--22.3.4, Dec 2013.

\bibitem{park}
J.~H. Park and J.~W. Jerome.
\newblock Qualitative properties of steady-state {Poisson-Nernst-Planck}
  systems: mathematical study.
\newblock {\em SIAM J. Appl. Math.}, 57(3):609--630, 1997.

\bibitem{quarteroni2007numerical}
A.~Quarteroni, R.~Sacco, and F.~Saleri.
\newblock {\em Numerical Mathematics}.
\newblock Texts in Applied Mathematics. Springer, 2007.

\bibitem{QV}
A.~Quarteroni and A.~Valli.
\newblock {\em Numerical Approximation of Partial Differential Equations}.
\newblock Springer-Verlag, 1997.

\bibitem{Rubinstein1990}
I.~Rubinstein.
\newblock {\em Electrodiffusion of Ions}.
\newblock SIAM, Philadelphia, PA, 1990.

\bibitem{Sachs2010}
F.~Sachs.
\newblock Stretch-activated ion channels: What are they?
\newblock {\em Physiology}, 25(1):50--56, 2010.

\bibitem{SG}
D.L. Scharfetter and H.K. Gummel.
\newblock Large signal analysis of a silicon {Read }diode oscillator.
\newblock {\em IEEE Trans. Electron Devices}, ED-16(1):64--77, 1969.

\bibitem{Schmuck}
M.~Schmuck.
\newblock Analysis of the {Navier-Stokes-Nernst-Planck-Poisson} system.
\newblock {\em Mathematical Models and Methods in Applied Sciences},
  19(6):993--1015, 2009.

\bibitem{selberherr}
S.~Selberherr.
\newblock {\em Analysis and Simulation of Semiconductor Devices}.
\newblock Springer-Verlag, 1984.

\bibitem{Straub2001}
B.~Straub, E.~Meyer, and P.~Fromherz.
\newblock Recombinant maxi-k channels on transistor, a prototype of
  iono-electronic interfacing.
\newblock {\em Nat Biotech}, 19(2):121--124, 2001.

\bibitem{varga1999matrix}
R.S. Varga.
\newblock {\em Matrix Iterative Analysis}.
\newblock Springer Series in Computational Mathematics. Springer Berlin
  Heidelberg, 1999.

\bibitem{Siwy14}
I.~Vlassiouk and Z.S. Siwy.
\newblock Nanofluidic diode.
\newblock {\em Nano Letters}, 7(3):552--556, 2007.

\bibitem{voets2004}
T.~Voets, G.~Droogmans, U.~Wissenbach, A.~Janssens, V.~Flockerzi, and
  B.~Nilius.
\newblock {The principle of temperature-dependent gating in cold- and
  heat-sensitive TRP channels.}
\newblock {\em Nature}, 430(7001):748--754, 2004.

\bibitem{Zikatanov:EAFE}
J.~Xu and L.~Zikatanov.
\newblock A monotone finite element scheme for convection-diffusion equations.
\newblock {\em Mathematics of Computation}, 68:1429--1446, 1999.

\bibitem{zeck2001noninvasive}
G.~Zeck and P.~Fromherz.
\newblock Noninvasive neuroelectronic interfacing with synaptically connected
  snail neurons immobilized on a semiconductor chip.
\newblock {\em Proceedings of the National Academy of Sciences},
  98(18):10457--10462, 2001.

\end{thebibliography}


\end{document}